\newtheorem{theorem}{Theorem}[section]
\newtheorem{proposition}[theorem]{Proposition}
\newtheorem{lemma}[theorem]{Lemma}
\newtheorem{corollary}[theorem]{Corollary}
\newtheorem{question}[theorem]{Question}
\theoremstyle{definition}
\newtheorem{definition}[theorem]{Definition}
\newtheorem{examples}[theorem]{Example}
\theoremstyle{remark}
\newtheorem{remark}[theorem]{Remark}
\numberwithin{equation}{section}
\renewcommand{\epsilon}{\varepsilon}
\renewcommand{\phi}{\varphi}
\newcommand{\R}{\mathbb{R}}
\newcommand{\Z}{\mathbb{Z}}
\newcommand{\p}{\partial}
\begin{document}
\title{Weinstein presentations for high-dimensional antisurgery} 

\begin{abstract}
In this paper, we give an algorithm for describing the Weinstein presentation of Weinstein subdomains obtained by carving out regular Lagrangians. Our work generalizes previous work in dimension three and requires a novel Legendrian isotopy move (the ``boat move") that changes the local index of Reeb chords in a front projection.  As applications, we describe presentations for certain exotic Weinstein subdomains  and give explicit descriptions of $P$-loose Legendrians.
\end{abstract}

\date{\today}

\author[I. Datta]{Ipsita Datta} 
\address{Department of Mathematics, ETH Z\"urich, Z\"urich, Switzerland} \email{ipsita.datta@math.ethz.ch}

\author[O. Lazarev]{Oleg Lazarev} 
\address{Department of Mathematics, University of Massachusetts Boston, Boston, MA, USA} \email{oleg.lazarev@umb.edu}

\author[C. Mohanakumar]{Chindu Mohanakumar} 
\address{Mathematics Department, Fordham University, New York, NY, USA} \email{cmohanakumar@fordham.edu}

\author[A. Wu]{Angela Wu} 
\address{Department of Mathematics, Louisiana State University, Baton Rouge, LA, USA} \email{awu@lsu.edu}

\maketitle

\section{Introduction}\label{sec: introduction}

Weinstein domains \cite{Weinstein_91_CSSH} are exact symplectic manifolds equipped with symplectic handlebody decompositions, analogous to CW complexes in topology. These domains are relatively easy to construct by consecutively attaching handles along isotropic spheres in contact manifolds. Weinstein presentations or diagrams keep track of these isotropic spheres and their interactions with each other and make computation of invariants, like the wrapped Fukaya category, tractable via surgery formulas and gluing formulas \cite{Bourgeois_Ekholm_Eliashberg_surgery, Ganatra_Pardon_Shende_descent}. 

A wealth of symplectically exotic Weinstein domains can be constructed as \textit{subdomains} of more standard Weinstein domains, obtained by \textit{carving out} Lagrangian disks. 
For example, Sylvan and the second author \cite{Lazarev_Sylvan_2023_PLWS} showed that if $n \ge 5$, the standard cotangent bundle $T^*S^n$ has infinitely many Weinstein subdomains that are diffeomorphic to $T^*S^n$ but pair-wise non-symplectomorphic. They also constructed $P$-loose Legendrians as subdomains of the sector $T^*D^n$
and showed that these $P$-loose Legendrians are smoothly isotopic but not Legendrian isotopic. The contact analog of carving out Lagrangian disks---\textit{contact antisurgery}---is important for the construction of contact structures; for example, any contact structure on $S^{2n-1}$ is obtained by doing a single contact surgery and antisurgery on the standard contact structure $(S^{2n-1}, \xi_{std})$ \cite{Lazarev_2020_MCSS}. Weinstein subdomains are also attractive from the point of view of categorical invariants; their wrapped Fukaya categories are localizations of the Fukaya category of the ambient domain by the localization formula in \cite{Ganatra_Pardon_Shende_descent}.
Finally, any Weinstein domain deformation retracts to its singular Lagrangian skeleton; therefore the question of studying Weinstein subdomains of a fixed domain $X$ is precisely the question of finding singular Lagrangian skeleta in $X$.

Weinstein subdomains also arise naturally when relating complements of toric divisors and their (partial) smoothings. That is, $X\setminus D$ is a Weinstein subdomain of $X\setminus \tilde{D}$ for a Weinstein domain $X$, divisor $D \subset X$, and smoothing $\tilde{D}$ of $D$.  In 4-dimensions, the Weinstein presentations of such manifolds have been related in this context by Acu, Capovilla-Searle, Gadbled, Marinkovic, Starkston, and the fourth author \cite{ACSGNNSW_22_WHCSTD}. They define a necessary condition on a Delzant polytope of the toric manifold, which ensures that the complement of a corresponding partial smoothing of the toric divisor supports a Weinstein structure. They give an algorithm to construct an explicit Weinstein presentation for the complement of such a partially smoothed toric divisor. 

On the other hand,  Weinstein presentations have not been described yet for the constructions in \cite{Lazarev_Sylvan_2023_PLWS}; nor does there exist a general procedure for describing explicit Weinstein presentations for general Weinstein subdomains.  For instance, the construction of $P$-flexible Weinstein manifolds was relatively inexplicit due to the fact that it was not clear how the carving out/antisurgery modified the front of the original Legendrian. In particular, the front projection of these $P$-loose Legendrians was not known.

In this paper, our goal is to remedy this situation. We focus on the problem of constructing explicit Weinstein presentations of exotic Weinstein subdomains constructed by carving out Lagrangian disks. We want the presentation of such a subdomain to be in terms of a Weinstein presentation of the original Weinstein domain, which is compatible with the Lagrangian disk. In the process, we introduce a new Legendrian isotopy move, called the boat move.

As a concrete application, we give an explicit front projection for the $P$-loose Legendrians constructed indirectly in \cite{Lazarev_Sylvan_2023_PLWS}.
For any collection of integers $P$, 
a Legendrian $\Lambda$ is said to be \emph{$P$-loose} if it is isotopic to 
$\Lambda \sharp \Lambda_P$, the connected sum of $\Lambda$ and $\Lambda_P$
where $\Lambda_P$ is a $P$-loose Legendrian unknot, defined in $\mathbb{R}^{2n+1}$ for $n \ge 4$. 
This operation of taking connect sum with $\Lambda_P$ can be used to make $P$-loose Legendrian representatives of any smooth $n$-dimensional knot type. A Weinstein manifold constructed via handle attachments along $P$-loose Legendrians is called \emph{$P$-flexible}. In \cite{Lazarev_Sylvan_2023_PLWS}, it was shown that $P$-loose Legendrians have properties that generalize those of loose Legendrians, which were introduced by Murphy in \cite{Murphy_12_LLEHD}. If $0 \in P$, then $\Lambda_P$ is a loose Legendrian unknot. In general, $\Lambda_P$ is not necessarily loose but has Legendrian dga (with loop space coefficients) equal to $\Z[\frac{1}{P}]$, see Section~\ref{sec: main construction}. Furthermore, for any Legendrian $\Lambda \subset Y$, the Chekanov-Eliashberg dga satisfies 
$$	
CE(\Lambda \sharp \Lambda_P) \cong CE(\Lambda)[ P^{-1}].
$$
and hence vanishes with  $\mathbb{Z}/P\mathbb{Z}$ coefficients.

\subsection{Main results} 
For any Weinstein subdomain $X_0$ of a domain $X$, the complement $X\setminus X_0$ admits the structure of a Weinstein cobordism $C$. This cobordism has a subcritical part $C_{sub}$ that does not change invariants like the Fukaya category, and some critical handles $H_i^n$, $i = 1, \dots, l$, with Lagrangian co-cores disks $L_i^n \subset X$. Hence $X_0$ can be described, up to subcritical cobordism, as $X\setminus \left( \cup_{i=1}^l L_i \right)$. 

Conversely, given any Lagrangian disk $L \subset X$, $X\setminus L$ is an exact subdomain of $X$.
The contact boundary $\partial(X\setminus L)$  of $X \setminus L$ is obtained from the contact boundary $\partial X$ of $X$
by doing \emph{antisurgery}, or $(+1)$-contact surgery, along the Legendrian sphere $\partial L \subset \partial X$. 
To ensure that $X\setminus L$ is Weinstein, we assume that $L$ is \textit{regular} \cite{Eliashberg_Flexible_Lagrangians}, which implies that our starting Weinstein presentation for $X$ is compatible with $L$, as we describe later.

The main goal of this paper is to give explicit constructions---Weinstein presentations and handlebody decompositions---for Weinstein manifolds obtained via antisurgery.
To do this we introduce a new family of $n$-dimensional Legendrian moves in the front projection. We construct Legendrian isotopies called \emph{$D^k$-suspensions} that consist of a Legendrian isotopy $\psi$ of an $(n-k)$-dimensional slice suspended over a $k$-dimensional disk. 
\begin{proposition}\label{prop: boat_move_intro}
        Given a Legendrian isotopy $ \psi: D^{n-k} \times [0,1]_t \to \R^{2(n-k) + 1}$ which is the identity near $\partial D^{n-k}$ and t-independent near $\partial [0,1]$, its
        $D^k$-suspension, $\Sigma_{D^k} \{\psi\}$ is a Legendrian in $\R^{2n+1}$, and is Legendrian isotopic to $D^k \times D^{n-k}$ relative to the boundary.
    \end{proposition}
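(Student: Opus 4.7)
The plan is to realize $\Sigma_{D^k}\{\psi\}$ as the image of the product disk $D^k \times D^{n-k}$ under an ambient contact isotopy of $\R^{2n+1}$ that implements $\psi$ in the $\R^{2(n-k)+1}$-factor, and then to linearly contract the generating contact Hamiltonian to zero. The boundary hypotheses on $\psi$ are precisely what is needed so that this contraction preserves the vanishing near $\partial(D^k \times D^{n-k})$, and hence the resulting family is an isotopy rel boundary.

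Concretely, I would first promote $\psi$ to an ambient contact isotopy of $\R^{2(n-k)+1}$ generated by a compactly supported contact Hamiltonian $H_t$, chosen so that $H_t$ vanishes near $\partial D^{n-k}$ in the base directions and for $t$ near $\partial [0,1]$; this is possible by the hypotheses on $\psi$. Unpacking the definition of the $D^k$-suspension, I would express $\Sigma_{D^k}\{\psi\}$ as the image of $D^k \times D^{n-k}$ under a contact isotopy $\widetilde{\Psi}$ of $\R^{2n+1}$ built from $H_t$ by identifying the time parameter $t$ with one of the $D^k$-coordinates. I would then consider the family of contact Hamiltonians $H^s_t := (1-s)H_t$, for $s \in [0,1]$, inducing Legendrian isotopies $\psi^s$ which each satisfy the hypotheses of the proposition. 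Since $\psi^0 = \psi$ and $\psi^1$ is the trivial isotopy, whose $D^k$-suspension is by construction the product disk $D^k \times D^{n-k}$, the family $s \mapsto \Sigma_{D^k}\{\psi^s\}$ provides the desired Legendrian isotopy. Relativity to the boundary follows from the uniform-in-$s$ vanishing of $H^s_t$ near $\partial D^{n-k}$ and $\partial [0,1]$, which forces $\Sigma_{D^k}\{\psi^s\}$ to coincide with the product disk in a neighborhood of $\partial(D^k \times D^{n-k})$ for every $s$.

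The main obstacle I anticipate is verifying that $\Sigma_{D^k}\{\psi^s\}$ remains an \emph{embedded} Legendrian throughout the deformation (not merely immersed), and that the suspension construction depends smoothly enough on $\psi$ to yield a smooth family. Both issues should reduce to the ambient contact-isotopy interpretation above: the product disk is embedded, and a global contact diffeomorphism of $\R^{2n+1}$ preserves embeddedness, so $\Sigma_{D^k}\{\psi^s\}$ stays embedded provided the induced contact isotopy $\widetilde{\Psi}^s$ is globally defined on $\R^{2n+1}$. The latter is guaranteed by the compact support of $H^s_t$, which is preserved under the linear contraction $H \mapsto (1-s)H$. Once these points are in place, the Legendrian character of each $\Sigma_{D^k}\{\psi^s\}$ is automatic from the construction via the contact Hamiltonian.
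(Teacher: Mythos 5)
Your proof is correct in substance, and its core idea coincides with the paper's: produce a one-parameter family of suspensions that shrinks $\psi$ down to the trivial isotopy, and observe that the boundary hypotheses keep every member of the family standard near $\partial(D^k\times D^{n-k})$. The implementations differ, though. The paper never leaves the Legendrian: it simply defines $\phi_\tau$ to be the unique Legendrian lift of $\{(s,\psi_{\tau\beta_k(s)}(\theta))\}$, i.e.\ it rescales the time at which $\psi$ is evaluated inside the suspension itself; embeddedness and the rel-boundary property are then immediate from uniqueness of the lift (the projection is already injective) and from $\beta_k|_{\partial D^k}=0$ together with the hypotheses on $\psi$. You instead pass through an ambient contact isotopy: extend $\psi$ to a compactly supported contact Hamiltonian $H_t$ and contract $H_t$ linearly. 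This works (note that $\psi^s$ generated by $(1-s)H_t$ is not $\psi_{(1-s)t}$ unless $H$ is autonomous, but each $\psi^s$ still satisfies the hypotheses, e.g.\ because the contact vector field depends linearly on $H$ and vanishes along the stationary collar), but it is heavier machinery: it needs the Legendrian-to-ambient extension theorem with the vanishing conditions you impose, and your description of $\widetilde{\Psi}$ is imprecise as stated --- the time parameter must be identified with $\beta_k(s)$, a function of all $k$ disk coordinates vanishing on $\partial D^k$, not with a single coordinate, and the $T^*\R^k$ momentum coordinates must be corrected by the conformal factor of $\phi_t^*\alpha_{n-k}$ and by $(H_{\beta_k(s)}\circ\phi_{\beta_k(s)})\,d\beta_k$ for $\widetilde{\Psi}$ to be a contactomorphism carrying the product disk onto $\Sigma_{D^k}\{\psi\}$. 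Once that computation is supplied, your embeddedness argument via a global contactomorphism is fine, although it is more than is needed: embeddedness of every suspension already follows from the injectivity of $(s,\theta)\mapsto(s,\psi^s_{\beta_k(s)}(\theta))$ and uniqueness of the Legendrian lift. What your route buys is the slightly stronger observation that the suspension is the image of the standard disk under an ambient contactomorphism; what the paper's route buys is economy, requiring nothing beyond the definition of the suspension.
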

We call the $D^k$-suspension of a $(n-k)$-dimensional Reidemeister 1 move an \emph{$(n,k)$-boat move}, see for instance Figure~\ref{fig:surface_boat_move}. 
Using these boat moves, we can construct our desired presentations for Weinstein domains obtained via antisurgery.

In dimension 3, there are several existing results, for example \cite{Ding_Geiges_09_HMCSD}, explaining how to do antisurgery along Legendrian circles that admit Lagrangian disk fillings. Our main contribution is that we are able to replicate such explicit constructions in higher dimensions by first using $(n,k)$-boat moves.

Next we state our main result. Let $L \subset X$ be a regular Lagrangian disk in a Weinstein domain $X^{2n}$.
By \cite{Eliashberg_Flexible_Lagrangians}, any regular Lagrangian disk $L \subset X$ in a Weinstein domain $X$ can be presented as $$D^n \subset T^*D^n \cup \left( \cup_i H_i\right),$$ where $H_i$ are Weinstein handles attached to $T^*D^n$ in the complement of $\partial D^n \subset T^*D^n$. For simplicity, we assume here that all of these handles have index $n$, although this assumption can be removed. 
Let $$\Lambda_\emptyset \cup \left( \cup_i \Lambda_i \right) \subset \mathbb{R}^{2n-1}$$
be the Legendrian link formed by the following Legendrians. First, $\Lambda_\emptyset = \partial D^n$ is the standard Legendrian unknot with front projection in $\mathbb{R}^{n}$  given by the ``flying saucer" (with a $S^{n-2}$-family of cusps and no other singularities). Second,  $\Lambda_i$ are the attaching Legendrians of $H_i$. Let $C_i$ denote the set of Reeb chords from $\Lambda_i$ to $\Lambda_\emptyset$ with front projection contained in the subset of $\R^{n}$ bounded by the flying saucer front projection $\pi(\Lambda_\emptyset) \subset \mathbb{R}^{n}$, and let $C = \cup_i C_i$; see Figure~\ref{fig:acceptable_Reeb_chords}. We assume all such chords are non-degenerate, and furthermore correspond to critical points of a local Morse function whose indices we call the local index of the Reeb chord; in particular, $C$ is finite. 

\begin{figure} [h]
	\centering
  \begin{tikzpicture}
    \node at (0,0) {\includegraphics[width=5.2cm]{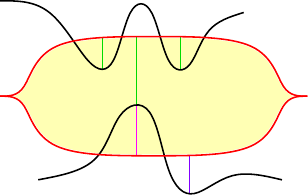}};
    \node at (-2.5,1.2) {$\Lambda_i$};
    \node[color=red] at (-2.5,-0.5) {$\Lambda_\emptyset$};
    \end{tikzpicture} 
	\caption{The green Reeb chords are bounded by the flying saucer and go from the black $\Lambda_i$ to the red $\Lambda_\emptyset$; these have critical points on $\Lambda_i$ with local index $1, 0, 1$ for the height difference Morse function from $\Lambda_i$ to $\Lambda_\emptyset$.  
We will apply $(1,0), (1,1), (1,0)$ boat moves at these critical points respectively, which correspond to doing a Reidemeister 1 move at the two index 1 critical points and nothing at the middle index 0 critical point. 
The purple Reeb chord is not bounded by the flying saucer. The pink Reeb chord is bounded by the flying saucer but does not go from $\Lambda_i$ to $\Lambda_\emptyset$.
 } 
	\label{fig:acceptable_Reeb_chords}
\end{figure}

\begin{theorem}\label{thm: general_antisurgery_intro}
 There is a  Weinstein presentation for the Weinstein subdomain $X \setminus L \subset X$ with the following properties: 
  \begin{itemize}
\item The Weinstein presentation of $X \setminus L$ has one more $(n-1)$-handle than the Weinstein presentation for $X$.
\item The $n$-handles for $X\setminus L$ are in one-to-one correspondence with the $n$-handles of $X$.
\item The attaching sphere $\Lambda_i'$ of the $n$-handle $H'_i$ of $X\setminus L$ is obtained from the attaching sphere $\Lambda_i$ of the corresponding handle $H_i$ of $X$ in the following way: for each Reeb chord $\gamma$ in $C_i$
with local index $k$, we apply an $(n, n-k)$-boat move to $\Lambda_i$ and do a cusp connected sum with a Legendrian that goes through the new $(n-1)$-handle one time.
  \end{itemize}
\end{theorem}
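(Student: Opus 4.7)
The plan is to interpret the carving operation $X \mapsto X \setminus L$ as a contact $(+1)$-antisurgery along $\Lambda_\emptyset = \partial L$ at the boundary, reduce the effect on each attaching Legendrian $\Lambda_i$ to a local calculation near each Reeb chord in $C_i$, and realize that local modification using a boat move together with a cusp connected sum.

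First, I would handle the antisurgery step globally. The standard Weinstein handle calculus identifies contact $(+1)$-antisurgery along a Legendrian sphere bounding a regular Lagrangian disk with the attachment of a single Weinstein $(n-1)$-handle whose cocore disk is Hamiltonian isotopic to $L$. This produces the extra $(n-1)$-handle in the presentation of $X \setminus L$ and leaves the set of $n$-handles in bijection with those of $X$, so the remaining task is to describe each new attaching Legendrian $\Lambda_i' \subset \partial(X \setminus L)$.

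Second, I would localize the problem. Using the regularity hypothesis of \cite{Eliashberg_Flexible_Lagrangians}, after a preliminary Legendrian isotopy one may assume that each $\Lambda_i$ meets the flying-saucer region of $\Lambda_\emptyset$ only in small neighborhoods of the Reeb chords in $C_i$, and that the chords are nondegenerate critical points of the height-difference Morse function from $\Lambda_i$ to $\Lambda_\emptyset$, well-separated in the front projection. Away from these neighborhoods, $\Lambda_i$ is disjoint from $L$ and survives unchanged into $\partial(X \setminus L)$. Inside the neighborhood of a chord $\gamma$ of local index $k$, the problem reduces to a purely local model: modify a Legendrian disk near a Morse critical point of index $k$ so that it ceases to sit inside the flying saucer, and instead routes through the cocore of the new $(n-1)$-handle.

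Third, I would realize this local modification. The claim is that the required modification is exactly the $(n, n-k)$-boat move, i.e.\ the $D^{n-k}$-suspension of a $k$-dimensional Reidemeister 1 move, followed by a cusp connected sum with a Legendrian strand passing once through the newly attached $(n-1)$-handle. The $k$ negative directions of the Hessian of the height-difference function at $\gamma$ carry the Reidemeister 1 move, while the $n-k$ positive directions are the suspension directions on which $\Lambda_i$ locally lies inside the flying saucer, matching the parameters of the $(n,n-k)$-boat move. By Proposition~\ref{prop: boat_move_intro}, these boat moves are Legendrian isotopies before antisurgery, so they do not change the Legendrian isotopy class of $\Lambda_i$ inside $\partial X$; after antisurgery, however, the small extra strand produced by the Reidemeister 1 step must cross the cocore of the new $(n-1)$-handle to escape the excised flying-saucer region, which is exactly what the cusp connected sum encodes. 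Consistency is checked by the reverse move: canceling the new $(n-1)$-handle against the $n$-handle reattached along $\Lambda_i'$ should recover the original Weinstein presentation of $X$ with its Lagrangian $L$.

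The main obstacle is the local verification in the third step: one must show that the prescribed front modification determined by the $(n, n-k)$-boat move is precisely the one produced by the contact geometry of antisurgery near a chord of local index $k$, and that no further perturbation is required. This amounts to a Darboux-chart calculation matching the suspension directions of the boat move to the index splitting of the Hessian of the height-difference function, together with a careful tracking of the cusp connected sum through the standard front-projection model of the new $(n-1)$-handle. Once that local matching is in place, the global assembly is routine handle calculus and Legendrian isotopy.
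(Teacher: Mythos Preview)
There is a genuine gap. Your third step reduces everything to a ``local verification'' that you do not carry out, and your heuristic for the cusp connected sum (``the small extra strand produced by the Reidemeister~1 step must cross the cocore of the new $(n-1)$-handle to escape the excised flying-saucer region'') is not an argument: after carving out $L$, the portion of $\Lambda_i$ that sat inside the flying saucer has simply been deleted, and nothing forces a canonical Legendrian filling of the resulting boundary, let alone one that crosses the cocore exactly once with a cusp singularity.

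The paper's mechanism is quite different and avoids any direct Darboux calculation. One first records the antisurgery as a $(+1)$-coefficient on $\Lambda_+ := \Lambda_\emptyset$, and then introduces a \emph{cancelling pair}: the new $(n-1)$-handle $H^{n-1}$ together with an auxiliary $(-1)$-Legendrian $\Lambda_-$ that traverses $H^{n-1}$ once and is arranged (after two preliminary handleslides) to be a Reeb pushoff of $\Lambda_+$ everywhere except where the $\Lambda_i$ link with them. The ``cusp connected sum with a Legendrian that goes through the new $(n-1)$-handle one time'' then arises concretely as a \emph{handleslide of $\Lambda_i$ over $\Lambda_-$}, one for each obstructing Reeb chord; this is the standard picture that sliding over a $(-1)$-Legendrian produces a circle of cusps. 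The boat move has a specific purpose you do not identify: if one handleslides at a chord whose local index is not maximal, the slide \emph{creates} new obstructing Reeb chords (compare the two rows of the paper's Figure~\ref{fig:no_new_Reeb}); the $(n,n-k)$-boat move first converts the index-$k$ critical point into a front maximum, so that the subsequent handleslide removes that chord without introducing new ones. After all such slides, $\Lambda_+$ and $\Lambda_-$ are parallel and their $(\pm 1)$-surgeries cancel, leaving only $(-1)$-surgeries and hence a genuine Weinstein presentation. Without the auxiliary $\Lambda_-$ and this handleslide interpretation, your outline has no concrete route to the claimed local model.
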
 

\begin{remark}
The assumptions for Theorem \ref{thm: general_antisurgery_intro} can be weakened. For example, as explained in Lemma \ref{lemma:non-degenerate_Reeb_chords}, 
any Legendrian link in $\mathbb{R}^{2n+1}$ can be perturbed by a $C^0$-small isotopy so that all Reeb chords in $C$ are non-degenerate and correspond to critical points of a Morse function; however, perturbing a degenerate Reeb chord may result in a larger (finite) number of non-degenerate Reeb chords. 

Also, the assumption that $X$ takes form $T^*D^n \cup (\cup_i H^n_i)$ can be generalized to allow subcritical handles. Generically, there will be no Reeb chords between $\Lambda_\emptyset$ and the attaching spheres of these subcritical handles, but the attaching spheres of the critical handles may interact with these subcritical handles. Hence, only a portion of these attaching spheres map to the contact boundary $\partial_\infty T^*D^n$, resulting in a Legendrian subset $\Lambda_i \subset \partial_\infty T^*D^n$ (which are not necessarily spheres but manifolds with boundary). In that case, a generalization of Theorem~\ref{thm: general_antisurgery_intro} holds by considering Reeb chords between $\Lambda_\emptyset$ and the subset $\Lambda_i$.
\end{remark}

Additionally, as a main application of Theorem~\ref{thm: general_antisurgery_intro}, we construct explicitly the $P$-loose Legendrians in the Weinstein presentations of $P$-flexible Weinstein domains.

\begin{corollary}\label{cor: explicit_P_loose_intro}
For integers $p \ge 0$ and $n \ge 2$, the $P$-loose Legendrian unknot, $\Lambda_P \subset \mathbb{R}^{2n+1}$, for $P = \{p\}$, is a Legendrian submanifold consisting of four loose Legendrian unknots which are completely parallel away from a bounded region. Within this bounded region they may be linked (in a way that depends on p) and are connected via three boat moves and cusp connected sum gluings (see Figure~\ref{fig:p_loose}); the front projection in this bounded region has no singularities except for the singularities in the local boat moves. 
\end{corollary}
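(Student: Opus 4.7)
The plan is to derive the explicit front projection of $\Lambda_P$ by running the construction of Theorem~\ref{thm: general_antisurgery_intro} on the specific Weinstein subdomain presentation from \cite{Lazarev_Sylvan_2023_PLWS} that realizes $\Lambda_P$ as a top-index attaching Legendrian. Concretely, I would take as input $X = T^*D^{n+1} \cup H^{n+1}$, where $H^{n+1}$ is a single critical handle attached along a standard Legendrian unknot $\Lambda_0 \subset \mathbb{R}^{2n+1}$, together with a regular Lagrangian disk $L \subset X$ (depending on $p$) whose boundary $\Lambda_\emptyset = \partial L$ is the flying saucer positioned relative to $\Lambda_0$ so that $X \setminus L$ is the $p$-localized subdomain whose associated critical attaching Legendrian is $\Lambda_P$. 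All front projection data is fixed at this stage.

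The next step is to enumerate the Reeb chords in the set $C$ of Theorem~\ref{thm: general_antisurgery_intro}, that is, the chords from $\Lambda_0$ to $\Lambda_\emptyset$ whose front projections lie inside the flying saucer of $\Lambda_\emptyset$. A direct inspection of the two fronts shows that there are exactly three such chords; after invoking Lemma~\ref{lemma:non-degenerate_Reeb_chords} to make them non-degenerate if needed, each corresponds to a critical point of the relative height Morse function with a definite local index $k_j$ for $j = 1, 2, 3$. Theorem~\ref{thm: general_antisurgery_intro} then outputs a Weinstein presentation of $X \setminus L$ consisting of one additional subcritical handle of index $n$ and a single critical handle whose attaching Legendrian is built from $\Lambda_0$ by performing an $(n+1, n+1-k_j)$-boat move at each of the three chords and taking a cusp connected sum with a Legendrian that threads the new subcritical handle once. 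This new attaching Legendrian is $\Lambda_P$.

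Putting the pieces together, $\Lambda_0$ and the three companion Legendrians give the claimed four Legendrian unknots, which are parallel outside the bounded neighborhood of the flying saucer where the boat moves and cusp sums are performed. The $p$-dependence of the linking inside that neighborhood is inherited directly from the $p$-dependent placement of $\Lambda_\emptyset$ relative to $\Lambda_0$, which shifts the locations and indices of the three Reeb chords. Each component is loose: the three companions thread a subcritical handle, producing a standard loose chart, while the modified $\Lambda_0$ carries loose charts supplied by the suspended Reidemeister~1 singularities of the boat moves. Outside the boat-move regions the construction introduces no additional cusp or swallowtail singularities, yielding the stated description of the front projection. The main obstacle I anticipate is combinatorial bookkeeping: tracking how the three local indices vary with $p$, verifying that the prescribed $(n+1, n+1-k_j)$-boat moves reproduce the linking shown in Figure~\ref{fig:p_loose}, and confirming via a Chekanov--Eliashberg dga computation with loop-space coefficients $\mathbb{Z}[1/p]$ that the resulting Legendrian is exactly the $P$-loose unknot of \cite{Lazarev_Sylvan_2023_PLWS}.
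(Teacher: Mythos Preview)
Your outline captures the first half of the paper's argument but misses its essential second half, and this gap is fatal for the conclusion.

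The output of Theorem~\ref{thm: general_antisurgery_intro} is a presentation of the \emph{carved} domain, which here is $T^*D^n \setminus D_U \cong B^{2n} \cup H^{n-1}$: one extra subcritical handle together with the modified Legendrian $\Lambda'$.  This is not yet a Legendrian in $\mathbb{R}^{2n+1} \subset \partial B^{2n}$.  To obtain $\Lambda_P$ in standard contact space, the paper (following \cite{Lazarev_Sylvan_2023_PLWS}) must attach a further \emph{flexible} critical handle $H_{flex}$ along a loose Legendrian $\Lambda_{flex}$ that threads $H^{n-1}$ once, then repeatedly handleslide $\Lambda'$ over $\Lambda_{flex}$ until $\Lambda'$ is disjoint from $H^{n-1}$, and finally cancel $H_{flex}$ against $H^{n-1}$.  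Your proposal stops before this entire step.  Without it you have neither produced a Legendrian in the right ambient manifold nor accounted for where the four loose unknots come from: they are the four copies of the loose $\Lambda_{flex}$ picked up by these slides (one for each strand of $\Lambda'$ passing through $H^{n-1}$), not ``$\Lambda_0$ together with three companions''.

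Relatedly, both of your looseness arguments are incorrect.  A boat move is a Legendrian isotopy (Proposition~\ref{prop: boat_move_intro}), so it cannot introduce a loose chart on a Legendrian that did not already have one; the suspended Reidemeister~1 singularities are not loose charts.  Likewise, a Legendrian that merely threads a subcritical handle once is in cancelling position, not loose.  Looseness in the paper's construction is imported entirely from $\Lambda_{flex}$.

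Two smaller points.  First, the three Reeb chords are not found by ``direct inspection'': they arise because the $P$-Moore space $S^1 \cup_p D^2$ carries a Morse function with exactly three critical points (indices $0$, $1$, $2$), and Proposition~\ref{prop: D_U Legendrian} identifies these with the relevant chords.  Their local indices are therefore $0,1,2$ independent of $p$; what depends on $p$ is the embedding of the Moore space in $S^{n-1}$, hence the linking.  Second, in the paper's setup the Legendrian being tracked is the \emph{stop} $\partial D^n$ of the sector $T^*D^n$, and the disk being carved is $D_U$ (whose boundary plays the role of $\Lambda_\emptyset$); your $X = T^*D^{n+1} \cup H^{n+1}$ with an extra critical handle does not match this.
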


\begin{figure}
    \centering
    \includegraphics[width=6.35 cm]{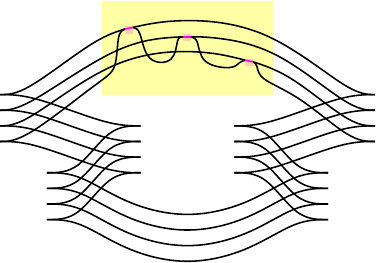}
    \caption{A cartoon depicting a low-dimensional slice of a $P$-loose Legendrian which cuts through the three boat move cusp connect sums, represented by the pink regions. Away from a neighbourhood of this slice, the black Legendrians may be linked in the yellow region but are parallel outside of it.} 
    \label{fig:p_loose}
\end{figure}

\subsection{Structure of the paper}

In Section~\ref{sec: background}, we give some background on antisurgery and $P$-loose Legendrians. In Section~\ref{sec: Legendrian_moves}, we discuss the boat move and prove Proposition~\ref{prop: boat_move_intro}. In Section~\ref{sec: main construction}, we present our construction for producing Weinstein presentations for subdomains and prove Theorem~\ref{thm: general_antisurgery_intro} and Corollary~\ref{cor: explicit_P_loose_intro}. We then use our construction to give explicit examples of Weinstein antisurgery manifolds and conclude with some open questions.

\subsection{Acknowledgements}
The authors would like to thank the organizers (Orsola Capovilla-Searle, Roger Casals, and Caitlin Leverson) of the SYNC Early Career Workshop at the University of California, Davis where this project was started in August, 2022. The authors would also like to thank Hiro Lee Tanaka, Lea Kenigsberg, Tonie Scroggin, Georgios Dimitroglou Rizell, Jonathan Michala, and Josh Sabloff for helpful discussions. 
ID was supported by NSF grant DMS-1926686. OL was supported by NSF grant DMS-2305392. CM was supported by NSF Grant DMS-2003404. AW was supported by NSF grant DMS-2238131. 

\section{Background}\label{sec: background}

\subsection{Contact surgery and Weinstein manifolds} Let $M$ be an $n$-dimensional manifold. In the topological setting, we perform $k$-surgery on $M$, for $k \leq n$, by first removing a tubular neighborhood $N(S^k) \cong S^k \times D^{n-k}$ of a $k$-sphere, and then gluing back a copy of $D^{k+1} \times S^{n-k-1}$ along boundary $S^{n-k} \times S^{n-k-1}$. A choice of framing, that is, the particular identification $\phi: N(S^k) \cong S^k \times D^{n-k}$, specifies our gluing, which then determines our surgered manifold up to diffeomorphism.

When the $k$-sphere is a Legendrian sphere, $\Lambda$, inside $M$, a contact manifold, there is a canonical framing, provided an identification $\Lambda \cong S^k \subset \mathbb{R}^{k+1}$. To see this, note that $TM = T\Lambda \oplus \text{\textlangle}R_{\alpha}\text{\textrangle}\oplus J(T\Lambda) $, and so $N(\Lambda) \cong  \text{\textlangle}R_{\alpha}\text{\textrangle}\oplus J(T\Lambda)$ for a contact form $\alpha$ and corresponding Reeb vector field $R_\alpha$. The bundle $\text{\textlangle}R_{\alpha}\text{\textrangle}\oplus J(T\Lambda)$ can be identified with the stabilized tangent bundle of $\Lambda$, which carries a canonical trivialization after an identification of $\Lambda$ with $S^k \subset \mathbb{R}^{k+1}$ --- this trivialization is the canonical framing. At this point, we drop the dimension $k$ of our sphere from the notation, and refer to such a surgery by $(\pm \frac{m}{k})$-surgery, where $\pm \frac{m}{k}$ is a fraction relating our choice of framing to the canonical framing. For example, attaching a critical Weinstein handle affects the contact boundary by a $(-1)$-surgery.

\subsection{Legendrian moves}\label{sec:legendrian moves}
Legendrian moves refer to the replacement of a Legendrian by something Legendrian isotopic that differs from it only within a Darboux neighbourhood. These are often depicted via front diagrams.

Legendrian Reidemeister moves are analogous to knot diagram moves which preserve the topological knot type. They usually refer to replacements in the front diagrams of Legendrian knots in a contact $3$-manifold depicted in Figure~\ref{fig:R-moves}. These fully characterize Legendrian isotopies for $1$-dimensional Legendrians in contact $3$-manifolds.
\begin{theorem}[See \cite{Swiatowski_1992}]
    Two front diagrams represent Legendrian isotopic Legendrian knots if and only if they are related by regular homotopy and a sequence of moves shown in Figure~\ref{fig:R-moves}.
\end{theorem}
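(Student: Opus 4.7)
The plan is to follow the standard two-part strategy: verify each Legendrian Reidemeister move is indeed a Legendrian isotopy (the easy direction), and then use a genericity/stratification argument to reduce an arbitrary Legendrian isotopy to a concatenation of such moves with regular homotopy (the hard direction).

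For the easy direction, each move in Figure~\ref{fig:R-moves} takes place inside a Darboux ball $(\R^3, \xi_{\std})$. I would write down explicit one-parameter families of Legendrian arcs whose front projections realize the pictured transitions: for R1 (cusp birth/death), a small zigzag continuously shrinking to a smooth arc with matching $1$-jets at the endpoints; for R2, two Legendrian arcs translating past each other with controlled slopes so that the front picks up, then cancels, a pair of crossings; for R3, three Legendrian strands with pairwise-distinct generic slopes whose three double points coalesce into a triple point and then separate again. In each case one checks directly that the family is an exact contact isotopy.

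For the hard direction, starting from a Legendrian isotopy $\{\Lambda_t\}_{t \in [0,1]}$ I would consider the associated one-parameter family of front projections $\{\pi(\Lambda_t)\}$ under $\pi: \R^3 \to \R^2$. A generic Legendrian curve has a front whose only singularities are transverse double points and semicubical cusps. By a perturbation of the isotopy rel endpoints, achieved via a standard transversality theorem applied to the $1$-jet map of the family $\Lambda_t$, one can arrange that outside finitely many moments $0 < t_1 < \cdots < t_N < 1$ the fronts differ only by an ambient planar isotopy (the ``regular homotopy'' in the statement), while at each $t_i$ a codimension-one degeneration of the front occurs. The key analytic step is then to classify these codimension-one degenerations using the singularity theory of Legendrian maps. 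In the Legendrian setting the list is precisely: (a) birth or death of a cusp pair (R1), (b) a self-tangency of two smooth sheets, which is automatically first-order since the Legendrian condition forces the two tangent slopes to agree at the contact point (R2), and (c) a transverse triple point of three smooth sheets (R3).

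The main obstacle is establishing completeness of this list while respecting the Legendrian constraint. Concretely, one must rule out codimension-one degenerations that occur for generic smooth planar families but are prohibited for Legendrian fronts (such as tangencies between sheets of differing slopes), and one must show that the Legendrian version of the multi-jet transversality theorem isolates the $t_i$ as finitely many moments at which exactly one such local model occurs, with the front otherwise changing by an isotopy of immersed plane curves with cusps. Once this local classification and its genericity are in place, concatenating the local Reidemeister models with the intervening planar isotopies reconstructs the ambient Legendrian isotopy as in the theorem.
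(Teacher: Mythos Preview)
The paper does not provide its own proof of this theorem; it is stated as a background result with the citation \cite{Swiatowski_1992} and no argument is given. There is therefore nothing in the paper to compare your proposal against.

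That said, your outline is the standard and correct strategy for proving this classical result: verify that each local move is realized by a Legendrian isotopy inside a Darboux ball, and then invoke a genericity argument (Thom transversality for multi-jets, adapted to the Legendrian constraint) to show that a one-parameter family of fronts generically encounters only the listed codimension-one singularities. Your identification of the main obstacle---establishing the completeness of the list of codimension-one front degenerations subject to the Legendrian condition, and isolating them at finitely many parameter values---is accurate and is precisely where the substance of Swiatkowski's argument lies. One small clarification: what you call ``regular homotopy'' of the fronts between the singular moments should more precisely be described as an ambient planar isotopy of the front diagram (preserving cusps and crossings), which in turn lifts to a Legendrian isotopy; the phrase ``regular homotopy'' in the theorem statement refers to this.
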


\begin{figure}
    \centering
    \includegraphics[width=5.3 cm]{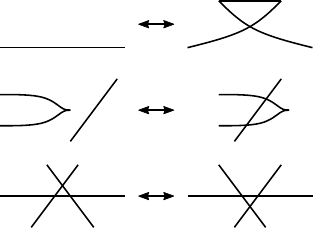}
    \caption{The Legendrian Reidemeister moves.} 
    \label{fig:R-moves}
\end{figure}

\begin{figure}
    \centering
    \includegraphics[width=14.7 cm]{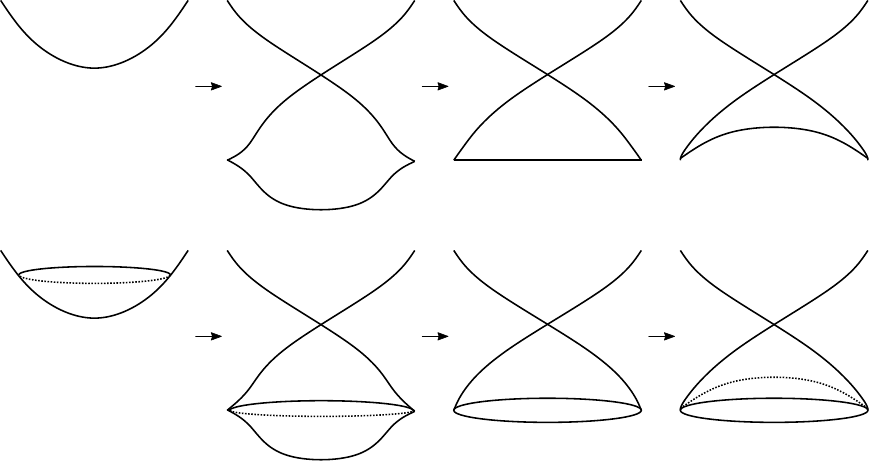}
    \caption{A Legendrian Reidemeister 1 and isotopy for a knot (top) and a surface (bottom).} 
    \label{fig:min_to_max}
\end{figure}

We will also be using higher dimensional first Reidemeister moves. For any Legendrian submanifold $\Lambda \subset (Y, \xi)$, the Legendrian submanifold $\Lambda'$ obtained by replacing a graphical portion of the front of $\Lambda$ (with respect to a particular Darboux chart) by the rightmost front depicted in Figure~\ref{fig:min_to_max} is Legendrian isotopic to $\Lambda$. To see this note that the first arrow is the same as one of the higher dimensional first Reidemeister moves described in \cite{Bourgeois_Sabloff_Traynor_2015_LCGFCG}. The subsequent two arrows of Figure~\ref{fig:min_to_max} are front diagrams of Legendrian isotopies where the domed part of the front is pushed inwards via an isotopy of the $(x_1, \dots, x_n, z)$-plane that does not have vertical tangencies at all times.  We refer to this replacement within a Darboux chart as a {\bf $k$-dimensional first Reidemeister move}.\footnote{Replacing the figures in Figure~\ref{fig:min_to_max} by their reflections about the $(x_1, \dots, x_n)$-plane gives analogous Legendrian moves. We omit these in the discussion for simplicity.} 
Let 
$$R1_k : [-1,1] \times D^k \to \R^{2k+1}$$ 
denote the associated Legendrian isotopy.
Note that the fronts depicted in Figure~\ref{fig:min_to_max} have $S^{k-2}$-symmetry about the $z$-axis passing through the point of singularity.

\begin{remark}
    The fronts depicted in Figure~\ref{fig:min_to_max} and therefore, fronts obtained whenever we apply the n-dimensional R1, are not generic \cite{Dimitroglou_Rizell_11_KLSFRC} as there is a singularity at the cone point. This does not matter for our purposes but is important to keep in mind especially if computing Legendrian Contact Homology.
\end{remark}

\begin{remark}
    Higher dimensional versions of the second and third Legendrian Reidemeister moves also exist, see for instance their depictions in Section 2.4 of \cite{Casals_Murphy_2019_LFAV}. In higher dimensions, these three moves in the front do not fully characterize all Legendrian isotopies. Generally, we can isotope Legendrian fronts past each other by the Reeb flow, as long as there are no Reeb chords between them.    
\end{remark}

In addition to Reidemeister moves, we will use Legendrian handle slides and the addition or removal of a cancelling pair of Weinstein handles in our diagrammatic calculus. These are moves that allow us to pass between different Weinstein handle diagrams of equivalent Weinstein domains. In other words, before and after these moves, the Legendrians depicted are isotopic in the surgered contact boundaries (but maybe not isotopic in the original contact boundaries). 

Following Casals and Murphy \cite{Casals_Murphy_2019_LFAV}, we depict the effect of Legendrian handle slides in Figure~\ref{fig:handleslides}. A handle slide over a $(+1)$-surgery curve produces a cone singularity, while sliding over a $(-1)$-surgery produces a circle of cusp singularities. Meanwhile, for a Weinstein manifold of dimension $2n$, the addition or removal of a cancelling pair refers to adding in an $n$-handle and an $(n-1)$-handle such that the attaching sphere of the $n$-handle and the belt sphere of the $(n-1)$-handle intersect exactly once.

\begin{figure}
    \centering
    \begin{tikzpicture}
    \node at (0,0) {\includegraphics[width=14.5 cm]{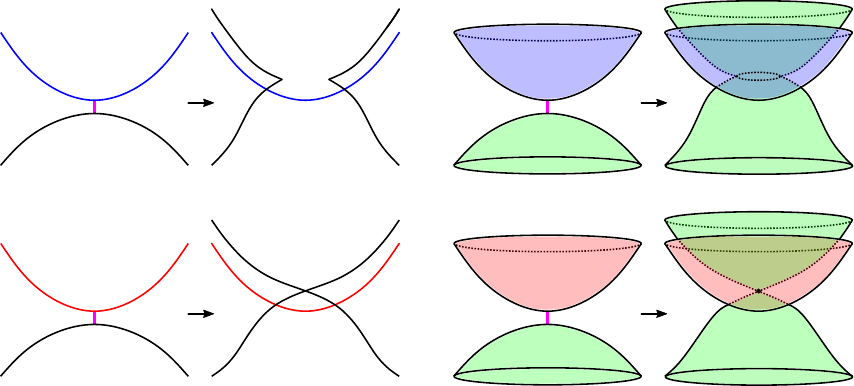}};
    \node[color=blue] at (-7.2,2) {$-1$};
    \node[color=red] at (-7.2,-1.7) {$+1$};
    \node[color=blue] at (0.5,2) {$-1$};
    \node[color=red] at (0.5,-1.7) {$+1$};
    \end{tikzpicture}
    \caption{Handle slides over (-1) and (+1) Legendrians (in blue and red respectively) in 3 and 5 dimensions.} 
    \label{fig:handleslides}
\end{figure}

\subsection{Loose and $P$-loose Legendrians, flexible and $P$-flexible domains}\label{sec:flexibility}

In \cite{Murphy_12_LLEHD}, Murphy introduced a class of Legendrians called loose Legendrians. These Legendrians are characterized by an explicit local model.

\begin{definition}
A loose unknot $\Lambda_l \subset (\mathbb{R}^{2n+1}, \xi_{std})$ for $n \ge 2$ is a Legendrian that is formally isotopic to the Legendrian unknot and there is an $\mathbb{R}^3$ slice that intersects $\Lambda_l$ transversely and the front projection of $\Lambda_l \cap \mathbb{R}^3$ is the one-dimensional stabilized arc (see Figure~\ref{fig:loose_chart}). 
A Legendrian $\Lambda \subset (Y, \xi)$ is loose if it is Legendrian isotopic to $\Lambda \sharp \Lambda_l \subset Y \sharp \R^{2n+1} \cong Y$. 
\end{definition}

\begin{figure}
    \centering
    \includegraphics[width=1.5 cm]{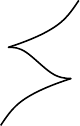}
    \caption{A one-dimensional stabilized Legendrian arc.} 
    \label{fig:loose_chart}
\end{figure}

\begin{remark}
Since we prefer to work with closed Legendrians, the above definition involves loose Legendrian unknots (which are spheres) instead of the original definition which involves loose Legendrian \textit{charts}, which are Legendrian disks; these definitions are equivalent.
\end{remark}

We emphasize that there is not a canonical model for the loose Legendrian unknot (except in dimension 1, which must be excluded for reasons explained below). For example, we can take any codimension zero subdomain $U$ near the cusp of the Legendrian unknot $\Lambda_\emptyset$ and push through $U$ past the cusp to create $\Lambda_U$ (see Figure~\ref{fig:push_to_loose}). 
Then $\Lambda_U$ is always loose and if $U$ has Euler characteristic zero, then $\Lambda_U$ is Legendrian isotopic to the loose Legendrian unknot. 
Alternatively, one can take any \textit{closed} codimension $1$ submanifold $V$ of $\Lambda_\emptyset$ and add a $1$-dimensional zig-zag spun by $V$ to create $\Lambda_V$. Again if $V$ has Euler characteristic zero, $\Lambda_V$ is a Legendrian unknot. 
 Another way to see the non-existence of a canonical model is to observe that there is no canonical way to extend the one-dimensional stabilized arc to a higher dimensional Legendrian disk such that the extension is standard near the boundary.

\begin{figure}
    \centering
    \includegraphics[width=9.8 cm]{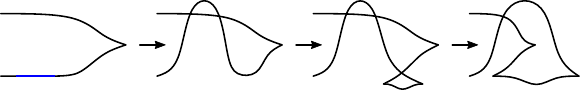}
    \caption{Near the cusp of the black Legendrian, we push through the blue subdomain which perturbs the knot into becoming loose.} 
    \label{fig:push_to_loose}
\end{figure}

This indicates a proliferation of loose Legendrians unknots, making it impossible to speak of \textit{the} loose Legendrian unknot. However, the main result proven in \cite{Murphy_12_LLEHD} about loose Legendrians is that they satisfy an h-principle, which essentially says that, if loose Legendrians $\Lambda_1, \Lambda_2$ in $(Y, \xi)$ are smoothly isotopic (and satisfy additional tangential data), then they are Legendrian isotopic. 
This means that the symplectic geometry of loose Legendrians reduces just to classical differential topology. In particular, all loose Legendrian unknots are Legendrian isotopic. 
For $n = 1$, the local model still makes sense but the h-principle does not hold and therefore, we speak of loose Legendrians only for $n \ge 2$.

Using loose Legendrians, Cieliebak and Eliashberg defined the class of flexible Weinstein domains \cite{Cieliebak_Eliashberg_2012_FSWB}. 

\begin{definition} 
A Weinstein domain  is 
\emph{flexible} if the attaching spheres for its half-dimensional handles are loose Legendrians.
\end{definition}
More generally, we can consider flexible Weinstein sectors. A sector is equivalently a Weinstein domain with the extra data of a Weinstein hypersurface in its contact boundary. A flexible sector is one for which the ambient domain is flexible and the Weinstein hypersurface is loose (in the sense that all core disks of its critical handles are loose Legendrians).
We point out that, in work of Murphy-Siegel~\cite{MurSie18}, these domains are called \textit{explicitly} flexible and Weinstein domains are called flexible if they admit a Weinstein homotopy to an explicitly flexible one; this notion of flexibility is tautologically preserved under Weinstein homotopy.

\subsection{Antisurgery construction of $P$-loose Legendrian unknots}\label{sec: antisurgery construction of $P$-loose legendrians}

In this section, we review the construction of $P$-loose Legendrian unknots via antisurgery in \cite{Lazarev_Sylvan_2023_PLWS}. There, the authors considered Lagrangians disks $D_P \subset T^*D^n$ (described below) with Legendrian boundary $\partial D_P\subset \partial_\infty T^*D^n$ disjoint from the boundary of the zero-section $\partial D^n$. Then, they carved out these disks to obtain subdomains $T^*D^n \setminus D_P$ of $T^*D^n$. 

To construct $P$-loose Legendrian unknot $\Lambda_P$, we view $T^*D^n\setminus D_P$ as a Weinstein sector, or equivalently as a Weinstein domain $B^{2n}\setminus D_P$ with a Legendrian stop in its contact boundary. This can be done as follows.
As a Weinstein domain, $B^{2n} \setminus D_P$ is just $B^{2n} \cup H^{n-1}$.
Since $\partial D^n$ and $\partial D_P$ are disjoint, $\partial D^n$ can be viewed as a Legendrian in the carved out domain $B^{2n} \cup H^{n-1} = B^{2n} \setminus D_P$. 
Next, we recover $B^{2n}$ by attaching a flexible handle along any Legendrian that is loose in the complement of $\partial D^n \subset \partial (B^{2n}\cup H^{n-1})$. The image of $\partial D^n$ in this new $B^{2n}$ is denoted $\Lambda_P$, the $P$-loose unknot. 

To complete the construction of $\Lambda_P$, we need to describe the construction of the Lagrangian disks $D_P \subset T^*D^n$.  These disks were originally introduced by Abouzaid and Seidel in Section 3b of \cite{Abouzaid_Seidel_2010_ASMHR}. Let $U \subset S^{n-1}$ be a compact codimension zero submanifold with smooth boundary. Let 
$$f: S^{n-1} \rightarrow \mathbb{R}$$ 
be a $C^1$-small function with zero as a regular value, so that $f$ is strictly negative in the interior of $U$, zero on $\partial U$, and strictly positive on $S^{n-1} \setminus \overline{U}$. Next, we consider $S^{n-1}$ as the $\frac{1}{2}$-radius sphere $S^{n-1}_\frac{1}{2} \subset D^n$ and extend $f$ to a smooth Morse function (again called)
$$f: D^n \rightarrow \mathbb{R}$$ so that $f$ is $C^0$-small in the $\frac{1}{2}$-radius disk and satisfies 
$$f(tq) = |t|^2 f(q), \quad \text{for} \quad q \in S^{n-1}_\frac{1}{2}, \quad 1\leq t\leq 2.$$ 
Let $\Gamma(df)$ be the graph of $df$ in $T^*D^n$ and let 
$$D_U = \Gamma(df) \cap B^{2n}.$$
Since $f$ is homogeneous for $|q| \ge \frac{1}{2}$ and $0$ is a regular value of $g$, $D_U$ has Legendrian boundary (with respect to the standard radial Liouville vector field on $B^{2n}$) that is disjoint from $\partial D^n$. Furthermore, there is a Lagrangian isotopy $\Gamma(d(sf))_{s \in [0,1]}$ from the zero-section $D^n \subset T^*D^n$ to $D_U$ (that intersects the stop $\partial D^n$ precisely when $s=0$). 

Now, let $U$ be a $P$-Moore space: a CW complex whose reduced singular cohomology is isomorphic to $\mathbb{Z}/P\mathbb{Z}$ in some degree. For example, one can take the mapping cone of the degree $P$ map $$\phi_P: S^k \rightarrow S^k. $$ 
Then the resulting Lagrangian disk $D_U$ is called $D_P$, and is the Lagrangian disk $D_P$ used earlier to construct $\Lambda_P$.

\begin{remark}
Note that the above construction of $D_U \subset T^*D^n$ requires a smooth embedding $U \subset S^{n-1}$. If $U$ is a $P$-Moore space, this is possibly only if $n \ge 5$, hence there exist $P$-loose Legendrians of dimension at least four.
\end{remark}

The sector $T^*D^n \setminus D_U$ is obtained by doing antisurgery on the boundary of $D_U$, while keeping track of the original stop $\partial D^n$ of $T^*D^n$. In particular, the Legendrian boundaries $$\Lambda_U :=  \partial D_U \text{ and } \Lambda^{n-1} :=  \partial D^n $$  
are linked in a way we now describe. Note that we can view $\Lambda^{n-1} \subset \partial T^*D^n$ as the Legendrian unknot in $\mathbb{R}^{2n-1}$.

\begin{proposition} \label{prop: D_U Legendrian}
$\Lambda_U$ is contained in a small neighborhood of $\partial D^n$, and is given by the 1-jet of $f|_{\partial D^n}$, and its front projection in $\partial D^n\times \mathbb{R}$ is given by the graph of $f|_{\partial D^n}$. In particular, $\Lambda_U$ can be isotoped so that its front is obtained from $\partial D^n$ by a negative Reeb pushoff on $U'$, a smaller open set $U' \subset U$, and a positive Reeb pushoff in $\partial D^n \setminus U''$ for some larger open set $U'' \supset U$.  See  Figure~\ref{fig:D_U boundary}.
\end{proposition}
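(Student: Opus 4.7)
The plan is to combine the degree-$2$ homogeneity of $f$ with a Legendrian neighborhood theorem identification to directly recognize $\Lambda_U$ as a $1$-jet.

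First I would check that $\Lambda_U$ is Legendrian: writing the radial Liouville form on $B^{2n}$ as $\lambda = \tfrac{1}{2}(p_i\, dq^i - q^i\, dp_i)$, a direct computation gives $\lambda|_{\Gamma(df)} = \tfrac{1}{2}(\partial_i f \, dq^i - q^i \partial_{ij} f \, dq^j)$, and Euler's identity for $f$ homogeneous of degree $2$ (valid on $|q|\in[1/2,1]$) yields $q^i\partial_{ij}f = \partial_j f$, so $\lambda|_{\Gamma(df)} \equiv 0$ in the conical region. Hence $\Lambda_U := \Gamma(df) \cap \partial B^{2n}$ is Legendrian, and since $f$ is $C^1$-small near the boundary it lies in an arbitrarily small neighborhood of $\partial D^n$.

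Next I would invoke the Weinstein Legendrian neighborhood theorem to identify a neighborhood of $\partial D^n \subset \partial B^{2n}$ contactomorphically with a neighborhood of the zero section in $J^1(\partial D^n)$. Working in polar coordinates $q = r\omega$ on $D^n$ with dual momenta $(p_r, p_\omega)$, the Reeb direction of the standard contact form at $\partial D^n = \{r=1, p=0\}$ is $\partial_{p_r}$, so $p_r$ plays the role of the jet coordinate $z$ and $p_\omega$ of the cotangent coordinate in $J^1(\partial D^n) = T^*\partial D^n \times \mathbb{R}$. Writing $f(r\omega) = r^2 h(\omega)$ with $h$ proportional to $f|_{\partial D^n}$, the computation $df|_{r=1} = 2h(\omega)\, dr + dh(\omega)$ shows that the point of $\Lambda_U$ above $\omega$ has coordinates $(\omega, p_r, p_\omega) = (\omega, 2h(\omega), dh(\omega))$. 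Under the identification with $J^1(\partial D^n)$ this is the $1$-jet $j^1(2h) = j^1(f|_{\partial D^n})$ up to a positive scalar (irrelevant for the Legendrian isotopy type), and its image under the front projection $J^1(\partial D^n) \to \partial D^n \times \mathbb{R}$ is the graph of $f|_{\partial D^n}$, as claimed.

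Finally, for the Reeb pushoff description I would use that Legendrian isotopies of $1$-jets correspond to smooth homotopies of the generating functions. Since $f|_{\partial D^n}$ is already strictly negative on the interior of $U$ and strictly positive on $S^{n-1}\setminus \overline U$, a standard cutoff construction produces a smooth homotopy from $f|_{\partial D^n}$ to a function $\tilde h$ with $\tilde h \equiv -\epsilon$ on some smaller open $U' \subset U$, $\tilde h \equiv \epsilon$ on the complement of a slightly larger $U'' \supset U$, and monotone interpolation in $U'' \setminus U'$. The induced Legendrian isotopy carries $\Lambda_U$ to a Legendrian that agrees with the negative Reeb pushoff of $\partial D^n$ over $U'$ and with the positive Reeb pushoff over $\partial D^n \setminus U''$. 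The main technical obstacle will be setting up the identification with $J^1(\partial D^n)$ carefully enough to pin down the $1$-jet correspondence; in particular, one must verify that the Reeb direction of the standard contact form on $\partial B^{2n}$ aligns with the $z$-direction of $J^1$, and the quadratic (rather than linear) homogeneity of $f$ is exactly what makes this alignment produce a genuine $1$-jet graph instead of a more complicated conical Legendrian.
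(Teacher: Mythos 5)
Your proposal follows essentially the same route as the paper: degree-$2$ homogeneity (your Euler-identity computation is equivalent to the paper's observation that the radial Liouville field is tangent to $\Gamma(df)$) makes the boundary Legendrian, an explicit identification of a neighborhood of $\partial D^n$ in $\partial B^{2n}$ with $J^1(S^{n-1})$ exhibits $\Lambda_U$ as the $1$-jet of $f|_{\partial D^n}$, and deforming the generating function to be constant $\mp\epsilon$ on $U'$ and off $U''$ gives the pushoff picture. The one point to tighten is exactly the normalization you flag at the end: with the radial Liouville form the induced contact form on the boundary is $\lambda_{T^*S^{n-1}}+\tfrac{1}{2}\,dp_r$, so the jet coordinate is $z=\tfrac{1}{2}p_r$ rather than $p_r$ (this is the paper's contactomorphism $z=\tfrac{1}{2}p$), which turns your point $(\omega,dh,2h)$ into exactly $j^1(f|_{\partial D^n})$ and removes the spurious ``up to a positive scalar.''
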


\begin{proof}
As stated in the construction of $D_U = \Gamma(df)$ above, to ensure that $D_U$ has Legendrian boundary, we consider $T^*D^n$ as the Liouville domain 
$$
(B^{2n}, \frac{1}{2}(\sum_{i=1}^n x_i dy_i - y_i dx_i))
$$
equipped with a stop $\partial D^n$, which we identify with the Legendrian unknot. The Liouville vector field associated to the Liouville form $\frac{1}{2}(\sum_{i=1}^n x_i dy_i - y_i dx_i)$ 
is the radial vector field $\frac{1}{2}\sum_{i=1}^n (x_i \partial_{ x_i} + y_i \partial_{y_i})$. So the fact that $f$ has form $t^2 f(\theta)$ near $\partial D^n = S^{n-1}$, with coordinate $\theta$, implies that this Liouville vector field is tangent to $\Gamma(df)$ and hence $\Gamma(df)$ has Legendrian boundary. Near $\partial D^n$, $(B^{2n}, \frac{1}{2}(\sum_{i=1}^n x_i dy_i - y_i dx_i))$ equals 
$$(T^*S^{n-1}, \lambda_{T^*S^{n-1}}) \times (T^*[1-\epsilon, 1], \frac{1}{2}(tdp - p dt)) = (T^*S^{n-1}\times T^*[1-\epsilon, 1], 
\lambda_{T^*S^{n-1}} + \frac{1}{2}(tdp - p dt))
$$
To see this, note that both of these Liouville structures have the radial vector field as their Liouville vector field, which determines the Liouville form. The (convex) contact boundary of this domain is 
$T^*S^{n-1} \times T^*_1 [1-\epsilon, 1]$, the restriction to the cotangent fiber at $1$. The induced contact form is
$$\lambda_{T^*S^{n-1}} + \frac{1}{2}dp$$
given by restricting the Liouville form 
$\lambda_{T^*S^{n-1}} + \frac{1}{2}(tdp - p dt)$ to the cotangent fiber at $1$. Furthermore, this contact boundary is contactomorphic to 
$$(J^1(S^{n-1}) = T^*S^{n-1}\times \mathbb{R}_z, \lambda_{T^*S^{n-1}} + dz)$$
via the map $z = \frac{1}{2}p$. 

Near its boundary, our Lagrangian is given by 
$$
d(t^2 f(\theta)) = t^2 d_\theta f + 2t f(\theta) dt
$$
and hence its Legendrian restriction to the contact boundary $T^*S^{n-1} \times T^*_1 [1-\epsilon, 1]$ is $(d_\theta f, 2 f(\theta))$. Under the contactomorphism $z = \frac{1}{2}p$, this Legendrian maps 
to $(d_\theta f, f(\theta))$, the 1-jet of the function $f(\theta)$. Furthermore, we can isotopy $f$ through functions vanishing precisely on $\partial U$ so that $f$ is equal to $-1$ on $U'$
for a smaller open set $U' \subset U$  and equal to $+1$ on  $S^{n-1}\setminus U''$, for a larger open neighborhood $U'' \supset U$, giving us the claimed result.
\end{proof}
\begin{figure}
    \centering
    \begin{tikzpicture}
    \node at (0,0) {\includegraphics[width=6.9cm]{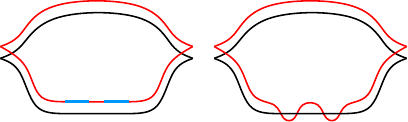}};
    \node[color=cyan] at (-2.1,-0.3) {$U$};
    \node[color=red] at (3,1.2) {$\partial D_U$};
    \node[color=red] at (-3.3,1.2) {$R_\epsilon(\partial D)$};
    \node at (-3,-1.2) {$\partial D$};
    \node at (3,-1.2) {$\partial D$};
    \end{tikzpicture}
    \caption{Left: Two parallel Legendrian unknots. The blue region in the top Legendrian is the smooth subdomain $U$. Right: The Legendrian link $\partial D_U \cup \partial D$, with $\partial D_U$ in red and $\partial D$ in black. This figure is obtained by pushing through $U$ in the top Legendrian down past the bottom Legendrian. } 
    \label{fig:D_U boundary}
\end{figure}

One of our goals is to is to do antisurgery on $\partial D_U$ and find a presentation for $\partial D^n$ in the resulting contact manifold.

\section{Constructions of Higher Dimensional Legendrian Isotopy Moves}\label{sec: Legendrian_moves}
In this section we describe constructions of some higher dimensional Legendrian isotopies. These isotopies are compactly supported, so we may view these as higher dimensional Legendrian moves. In the subsequent section, we will use these Legendrian moves in the construction of handle diagrams for Weinstein manifolds obtained via antisurgery.

\subsection{Suspensions of Legendrian isotopies}
Consider a Darboux chart with coordinates $x_1, \dots, x_n, y_1, \dots, y_n, z$ and contact form $\alpha = dz - \sum_{i = 1}^n y_i dx_i$. Let us use $\R^{2k}$ to refer to the span of $x_1, \dots, x_k, y_1, \dots, y_k$ and $\R^{2(n-k) + 1}$ for the span of $x_{k+1}, \dots, x_n, y_{k+1}, \dots, y_n, z$. Then, with the contact form 
$$\alpha_{n-k} := dz - \sum_{i = k+1}^n y_i dx_i,$$
$(\R^{2(n-k) + 1}, \alpha_{n-k})$ is a contact manifold. With this notation in mind, we may view
\begin{align*}
    \R^{2n + 1} = \R^{2k} \times \R^{2(n-k) + 1} \cong T^* \R^n \times \R^{2(n-k) +1}.
\end{align*}
Consider the Legendrian (with boundary) in this Darboux chart given by 
\begin{align*}
    D^k \times \{0\} \times D^{n-k} \times \{0\} \subset \R^k_{x_1, \dots, x_k} \times \R^k_{y_1, \dots, y_k} \times \R^{n-k}_{x_{k+1}, \dots, x_n} \times \R^{n-k + 1}_{y_{k+1}, \dots, y_n, z}.
\end{align*}
For ease of notation we write $ D^k \times D^{n-k} : =  D^k \times \{0\} \times D^{n-k} \times \{0\}$.
We view $D^k \times D^{n-k}$ as a $D^k$-parameter family of Legendrian $D^{n-k}$'s. Let $s \in D^k$ and $\theta \in D^{n-k}$ denote arbitrary elements.

Consider a Legendrian isotopy
\begin{align*}
    \psi: D^{n-k} \times [0,1]_t \to \R^{2(n-k) + 1}.
\end{align*}
We use the same notation
$\psi: \R^{2(n-k)+1} \times [0,1] \to \R^{2(n-k) + 1} $ to denote a contact isotopy that extends this Legendrian isotopy.
Assume that $\psi$ is identity near $\partial D^{n-k}$ and is $t$-independent near $\p [0,1]$, ie.,
\begin{align*}
    \frac{\p \psi}{\p t}(\theta, 0) = \frac{\p \psi}{\p t}(\theta, 1) = 0 \quad \text{ for all } \theta \in D^{n-k}.
\end{align*}
Fix a smooth ``bump function" $\beta_k: D^k \to [0,1]$ on the parameter space, such that
\begin{itemize}
    \item $\beta_k$ has a unique critical point which is a maxima at $0$ with $\beta_k(0) = 1$, 
    \item $\beta_k$ has radial symmetry, that is, $\beta_k(t) = \beta_k(t')$ whenever $|t| = |t'|$, and
    \item $\beta_k|_{\p D^k} \equiv 0$.
\end{itemize}

\begin{definition}\label{defn: suspension of an isotopy} 
    Define $\Sigma_{D^k}\{\psi\}$, the {\bf $D^k$-suspension of a Legendrian isotopy}, to be the unique Legendrian lift of 
    \begin{align*}
       \left\{ \left(  s, \psi(\theta, \beta_k(s))\right) \quad \big| \quad s \in D^k, \theta \in D^{n-k}  \right\}
    \end{align*}
    under the projection
    \begin{align*}
        \Pi : T^* \R^k \times \R^{2(n-k) + 1} &\to \R^k \times \R^{2(n-k) + 1},\\
        (x_1, \dots, x_k, y_1, \dots, y_k, x_{k+1}, \dots, y_n, z) &\mapsto (x_1, \dots, x_k, x_{k+1}, \dots, y_n, z).
    \end{align*}
\end{definition}
The Legendrian condition
implies that the momentum coordinates $y_1, \dots, y_k$ of $T^* \R^k$,
and thus the Legendrian submanifold itself, can be uniquely recovered from its projection $\Pi(\Sigma_{D^k}\{\psi\})$. 
The boundary of $D^k \times D^{n-k}$ is equal to 
$ \p D^k \times D^{n-k} \cup D^k \times \p D^{n-k}.$
By our assumptions on $\psi$ and $\beta_k$, and uniqueness of the Legendrian lift, 
\begin{align*}
    \p \Sigma_{D^k} \{\psi\} = \p \left(D^k \times D^{n-k}\right).
\end{align*}
We now prove Proposition~\ref{prop: boat_move_intro} from the introduction which states that $\Sigma_{D^k} \{\psi\}$ is Legendrian isotopic to $D^k \times D^{n-k}$ relative to boundary.
\begin{proof}[Proof of Proposition~\ref{prop: boat_move_intro}]
   We can define the isotopy by setting, for every $\tau \in [0,1]$, $\phi_\tau(D^k\times D^{n-k})$ to be the unique Legendrian lift of 
    \begin{align*}
     \left\{ \left(  s, \psi_{\tau \beta_k(s)}(\theta)\right) \quad \big| \quad s \in D^k, \theta \in D^{n-k}  \right\}
    \end{align*}
    under the canonical projection $\Pi$.
    Therefore, $\phi_\tau$ is a Legendrian embedding and $\phi$ is a Legendrian isotopy from $D^n\times D^{n-k}$ to $\Sigma_{D^k}(\psi)$. 

As $\p \Sigma_{D^k} \{\psi\} = \p \left(D^k \times D^{n-k}\right)$, 
for every $\tau \in [0,1]$, $\phi_\tau$ is the identity on the boundary $\p \left(D^k \times D^{n-k}\right)$.
\end{proof}

\begin{remark}
    The word ``suspension" in the context of Legendrians has been used to denote suspensions when the parameter space is $S^k$. In \cite{Dimitroglou_Rizell_Golovko_2021_OLPTS} the suspension $\Sigma_{S^k}\{\Lambda_\theta\}$ is defined similarly to Definition~\ref{defn: suspension of an isotopy} for $S^k$-parameterized families of (closed) Legendrian embeddings. 
    If the $S^k$-family is taken to be a constant family, then one recovers the $S^k$-spun of Legendrians which first appeared in \cite{Ekholm_Etnyre_Sullivan_2005_CHLSR} for $k = 1$ and then was extended to general $k$ in \cite{Golovko_2022_NINELFSS}.

    In this paper, our construction is a ``local" construction. We obtain a Legendrian with boundary within a Darboux chart. This is in contrast to the previous constructions where the obtained Legendrian is closed, that is, has no boundary.
    
    Another small point of difference from earlier constructions is that
    instead of beginning with a $D^k$-parametrized family of Legendrian embeddings, we first create such a family from a 1-parameter Legendrian isotopy.
\end{remark}
We may generalize the suspension construction to the case when the initial Legendrian has a graphical front for a nonzero function. That is, instead of $D^k \times D^{n-k}$, consider Legendrian $N$ contained in a Darboux chart so that the front of $N$ in $D^n\times D^{n-k}\times \R$ is given by 
\begin{align*}
    \pi(N) = \Gamma(f_1 + f_2) = \{(x_1, \dots, x_k, x_{k+1}, \dots, x_n, f_1(x_1, \dots, x_k) + f_2(x_{k+1}, \dots, x_n) \}
\end{align*}
for two smooth functions
\begin{align*}
    f_1: D^k \to \R, \quad f_2: D^{n-k} \to \R.
\end{align*}

Let $\Lambda_{f_2}$ be the unique Legendrian lift in $\R^{2(n-k) + 1}$ of the front $\Gamma(f_2)$.  Again consider a Legendrian isotopy
\begin{align*}
    \psi : \Lambda_{f_2} \times [0,1] \to \R^{2(n-k) + 1}
\end{align*}
which is identity near the boundary of $\Lambda_{f_2}$ and satisfies
\begin{align*}
    \frac{\p \psi}{\p t}(\theta, 0) = \frac{\p \psi}{\p t}(\theta, 1) = 0 \quad \text{ for all } \theta \in  \Lambda_{f_2}.
\end{align*}
\begin{definition}\label{defn:graphical suspension of isotopy}
Define $\Sigma_{\Gamma(f_1)}\{\psi\}$, the {\bf $\Gamma(f_1)$-suspension of a Legendrian isotopy}, to be the unique Legendrian lift of
    \begin{align*}
        \Pi (\Sigma_{\Gamma(f)}\{\psi\}) = \left\{ \left(  s, (0, 0, f_1(s)) + \psi_{\beta_k(s)} (\theta) \right) \in D^k \times \R^{2(n-k)+1} \quad \big| \quad s \in D^k, \theta \in \Lambda_{f_2}  \right\}.
    \end{align*}
\end{definition}

Here, $s \in D^k$ is in the first $k$ position coordinates and $(0,0,f_1(s)) \in \R^{n-k} \times \R^{n-k} \times \R$ and $\psi_{\beta_k(s)}(\theta) \in \R^{2(n-k)+1}$. 
Analogous to Proposition~\ref{prop: boat_move_intro}, we have the following proposition. We do not include a detailed proof as it would merely be a slight tweak of the proof of Proposition~\ref{prop: boat_move_intro}.

\begin{proposition}
    The $\Gamma(f)$-suspension, $\Sigma_{\Gamma(f)}\{\psi\}$, is Legendrian isotopic to $N$ relative  boundary.
\end{proposition}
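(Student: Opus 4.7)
The plan is to imitate the proof of Proposition~\ref{prop: boat_move_intro} essentially verbatim, with the constant slice $D^k \times D^{n-k}$ replaced everywhere by the graphical Legendrian $N$, and with the $D^{n-k}$-factor in the isotopy replaced by $\Lambda_{f_2}$. The key observation is that the decomposition of the front into a part coming from $f_1(s)$ (which depends only on $s \in D^k$) and a part coming from $\psi_{\beta_k(s)}(\theta)$ (which lives in $\R^{2(n-k)+1}$) lets the contact form $\alpha$ split additively, so the Legendrian lift of the projection $\Pi$ is still uniquely determined by the condition $y_i = \partial_{x_i} z$ for $i = 1, \ldots, k$.

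First I would define the candidate isotopy $\phi_\tau$ for $\tau \in [0,1]$ as the unique Legendrian lift of
\[
\Pi(\phi_\tau(N)) = \left\{ \bigl( s,\; (0,0,f_1(s)) + \psi_{\tau \beta_k(s)}(\theta) \bigr) \;\big|\; s \in D^k,\; \theta \in \Lambda_{f_2} \right\}
\]
under $\Pi : T^*\R^k \times \R^{2(n-k)+1} \to \R^k \times \R^{2(n-k)+1}$. Existence and uniqueness of the lift follow exactly as before: the $y_{k+1},\dots,y_n$ coordinates are already built into $\psi_{\tau\beta_k(s)}(\theta)$ since $\psi_t$ takes values in the contact manifold $(\R^{2(n-k)+1}, \alpha_{n-k})$, and the $y_1,\dots,y_k$ coordinates are forced by $\alpha = 0$, i.e.\ $y_i = \partial z/\partial x_i$, which is well-defined because $z$ is a smooth function of $s$ on this parameter slice.

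Next I would check the endpoints and the boundary. At $\tau = 0$, the isotopy $\psi_0$ is the identity on $\Lambda_{f_2}$, so the projection becomes $\{(s, \theta + (0,0,f_1(s))) : \theta \in \Lambda_{f_2}\}$, whose front is the graph $\Gamma(f_1 + f_2)$; its Legendrian lift is $N$. At $\tau = 1$ we recover $\Sigma_{\Gamma(f_1)}\{\psi\}$ by Definition~\ref{defn:graphical suspension of isotopy}. For the boundary-relative condition, note that $\beta_k \equiv 0$ near $\partial D^k$, so $\psi_{\tau \beta_k(s)} = \psi_0 = \mathrm{id}$ for all $\tau$ when $s$ is near $\partial D^k$, and by hypothesis $\psi_t$ is the identity near $\partial \Lambda_{f_2}$; in both cases $\phi_\tau$ agrees with $N$ on $\partial N = \partial D^k \times \Lambda_{f_2} \cup D^k \times \partial \Lambda_{f_2}$ throughout the isotopy.

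The only mild point to verify, which I expect to be the main (but routine) obstacle, is that $\phi_\tau$ is a smooth embedding for every $\tau$, so that $\phi$ is genuinely a Legendrian isotopy rather than merely a family of immersions. Since $\psi$ is by hypothesis an ambient contact isotopy, each reparametrization $s \mapsto \psi_{\tau\beta_k(s)}$ is a smooth family of contactomorphisms of $\R^{2(n-k)+1}$, and the map $(s,\theta) \mapsto (s, (0,0,f_1(s)) + \psi_{\tau\beta_k(s)}(\theta))$ is injective because its first coordinate already separates different values of $s$ and $\psi_{\tau\beta_k(s)}$ is an embedding of $\Lambda_{f_2}$ for each fixed $s$; smoothness of $\beta_k$ and $\psi$ gives smoothness of the family. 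This produces the desired Legendrian isotopy relative to boundary from $N$ to $\Sigma_{\Gamma(f_1)}\{\psi\}$.
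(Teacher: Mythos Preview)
Your proposal is correct and is exactly the ``slight tweak'' of the proof of Proposition~\ref{prop: boat_move_intro} that the paper alludes to; indeed the paper omits the proof entirely with precisely this remark, and your write-up supplies the details (the $\tau$-family of Legendrian lifts, the endpoint and boundary checks, and the embedding verification) in the expected way.
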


\subsection{Boat move}
In this section, we introduce a new Legendrian move that can be viewed as a generalization of the Reidemeister $1$ move. Introducing a boat is a way of swapping non-maxima critical points in a graphical front with maximas and some cusp singularities. 

Consider a Legendrian $\Lambda \subset (Y, \xi)$ with front equal to the graph of a Morse function, i.e. the front in some Darboux chart is $\Gamma(f)$ for $f: D^n \to \R$ a Morse function. Suppose there exists a minima. Then, the $n$-dimensional Reidemeister $1$ move swaps in a maxima for the minima, as seen in Figure~\ref{fig:min_to_max}. We want to convert other index critical points to maximas as well.
\begin{definition}\label{defn:boat move}
Suppose the front of an open subset $\Lambda_0 \subset \Lambda$ locally is given as the graph of a Morse function $f: D^n \to \R$ with an index $k$ critical point at $\mathbf{0}$. By the Morse lemma, there exist local coordinates $x_1, \dots, x_n$ such that
\begin{align*}
    f(x_1, \dots, x_n) = -x_1^2 - \dots - x_k^2 + x_{k+1}^2 + \dots + x_n^2.
\end{align*}
Locally we can extend $(x_1, \dots, x_n)$ to Darboux coordinates $(x_1, \dots, x_n, y_1, \dots, y_n, z)$, as any diffeomorphism of $\R^n$ can be extended to a contactomorphism of the $1$-jet space, $J^1(\R^n)$. If we set 
\begin{align*}
    f_1: \R^k \to \R \quad &f_1(x_1, \dots, x_k) = -x_1^2 - \dots - x_k^2, \text{ and }\\
    f_2: \R^{n-k} \to \R \quad &f_2(x_{k+1}, \dots, x_n) = x_{k+1}^2 + \dots + x_n^2,
\end{align*}
we are in the set up of Definition~\ref{defn:graphical suspension of isotopy}. 
We define the {\bf $(n,k)$-boat move} to be the replacement of the graphical open subset $\Lambda_0$ by the $\Gamma(f_1)$-suspension of $R1_{n-k}$, $\Sigma_{\Gamma(f_1)} (R1_{n-k})$. Let $B_{n,k}$ denote the resulting Legendrian (locally). We refer to $B_{n,k}$ as the {\bf $(n,k)$-boat}.
\end{definition}
\begin{figure} 
    \centering
    \includegraphics[width=12 cm]{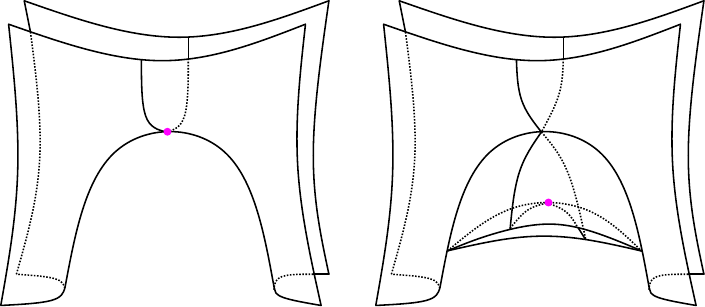}
    \caption{A (2,1)-boat move which changes a saddle point into a maximum.}
    \label{fig:surface_boat_move}
\end{figure}

\begin{remark}
    Note that if $k=n$, the boat move does not change anything. If $k = 0$, the boat move is equal to the $n$-dimensional first Reidemeister move.
\end{remark}

\begin{proposition}
    The $(n,k)$-boat $B_{n,k}$ is Legendrian isotopic to $\Lambda_0$ relative boundary.
\end{proposition}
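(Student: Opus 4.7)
The plan is to reduce the statement to a direct application of the preceding proposition concerning $\Gamma(f_1)$-suspensions of Legendrian isotopies. By construction $B_{n,k} = \Sigma_{\Gamma(f_1)}(R1_{n-k})$, so it suffices to verify that $R1_{n-k}$ qualifies as a Legendrian isotopy of $\Lambda_{f_2}$ in the sense required by that proposition.

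First I would identify the initial Legendrian of $R1_{n-k}$ with $\Lambda_{f_2}$. Recall that $R1_{n-k}$ is the $(n-k)$-dimensional first Reidemeister move of Figure~\ref{fig:min_to_max}; its starting configuration is the standard dome-shaped graphical front with a unique minimum at the cone point. The Legendrian $\Lambda_{f_2}$, being the $1$-jet of the quadratic $f_2(x_{k+1},\ldots,x_n) = x_{k+1}^2 + \cdots + x_n^2$ on $D^{n-k}$, is also graphical and has a unique minimum at the origin. Up to a preliminary Legendrian isotopy supported in a small neighborhood of the origin, smoothly interpolating between the quadratic model and the dome of Figure~\ref{fig:min_to_max}, we may regard $R1_{n-k}$ as a Legendrian isotopy $\psi$ of $\Lambda_{f_2}$ in $\R^{2(n-k)+1}$.

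Next I would verify the two hypotheses on $\psi$ demanded by the graphical suspension proposition: that $\psi$ is the identity near $\partial \Lambda_{f_2}$ and $t$-independent near $\partial[0,1]$. The first is automatic since the $(n-k)$-dimensional first Reidemeister move is compactly supported in a small Darboux ball around the cusp locus, and extends by the identity outside that support. The second is arranged by precomposing the time parameter with any smooth non-decreasing function $[0,1] \to [0,1]$ that is constant on collar neighborhoods of $0$ and $1$; this reparametrization leaves the end Legendrians and the isotopy class unchanged.

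With these checks in place, the preceding proposition delivers a Legendrian isotopy, relative to the boundary, from the graphical piece $N$ with front $\Gamma(f_1 + f_2) = \Gamma(f)$ to $\Sigma_{\Gamma(f_1)}(R1_{n-k}) = B_{n,k}$. Since $N$ is by definition the local Morse model of $\Lambda_0$ provided by the coordinate choice in Definition~\ref{defn:boat move}, this gives the claimed Legendrian isotopy from $\Lambda_0$ to $B_{n,k}$ relative to the boundary. The only real bookkeeping is matching the Morse-lemma chart to the Darboux chart used in the suspension construction, but this is handled by the standard extension of a diffeomorphism of $\R^n$ to a contactomorphism of $J^1(\R^n)$ already invoked in Definition~\ref{defn:boat move}; I expect no substantive analytic obstacle, as the content has been packaged into the previous proposition.
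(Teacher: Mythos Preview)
Your proposal is correct and follows essentially the same approach as the paper, which simply observes that the boat move is a special case of the graphical suspension construction and invokes the preceding proposition. Your write-up fills in the hypothesis checks (identifying $\Lambda_{f_2}$ with the initial front of $R1_{n-k}$, compact support, time-reparametrization) that the paper leaves implicit, but the underlying argument is identical.
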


\begin{proof}
     As the boat move is a special case of the suspension defined in Definition~\ref{defn:graphical suspension of isotopy}, $B_{n,k}$ is Legendrian isotopic to $\Lambda_0$ relative to boundary directly from Proposition~\ref{prop: boat_move_intro}.
\end{proof}

\begin{remark}
    The boat move gets its name because the $(2,1)$-boat looks like a overturned boat or canoe as seen in Figure~\ref{fig:surface_boat_move}. The $(2,1)$-boat is very similar to the uni-germ $A^{e, \pm}_3$, birth of cuspidal lips, see \cite{Arnold_1990_SCWF} or \cite{Goryunov_Alsaeed_2015_LIFF3M}. The main difference is the parametrization of the Legendrian front before the move.
\end{remark}

Our goal for introducing the boat was that we wanted all the Reeb chords to correspond to maximas of the front. Of course, the new front has non-smooth points.
\begin{proposition}\label{prop:boat_move_makes_max}
   Decompose the front of $B_{n,k}$ into a disjoint union of a finite number of graphical components away from non-smooth points. All but one of these graphical components are graphs of functions with no critical points. Further, the unique component with critical points has a unique critical point that is a maxima. Moreover, the tangent plane at the cusps are not parallel to $(x_1, \dots, x_n)$-plane.
\end{proposition}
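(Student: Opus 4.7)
The plan is to analyze the explicit suspension formula for the front of $B_{n,k}$ coming from Definition~\ref{defn:boat move}, which expresses it as the graph of
\[
F(s,\theta) \;:=\; f_1(s) + z_R(\theta,\beta_k(s)), \qquad s\in D^k,\ \theta\in D^{n-k},
\]
where $z_R(\theta,t)$ is the $z$-coordinate of $(R1_{n-k})_t(\theta)$, $f_1(s) = -|s|^2$, and $\beta_k(s)=b(|s|)$ is the radially symmetric bump with $b(0)=1$, $b(1)=0$, $b'(0)=0$, and $b'(r)<0$ for $r\in(0,1)$. The first step is to record the structure of the Reidemeister~1 slice $\psi_t = (R1_{n-k})_t$ underlying Figure~\ref{fig:min_to_max}: for every $t>0$ the front of $\psi_t$ carries an $S^{n-k-2}$-sphere of cusps of radius $r(t)>0$ (with $r(t)\to 0$ as $t\to 0$) separating an upper \emph{dome} region $\{|\theta|<r(t)\}$ from one or more outer graphical pieces; the dome has a unique maximum at $\theta=0$ with height $z_{\max}(t)$ strictly increasing in $t$, while $\nabla_\theta z_R(\theta,t)\ne 0$ on each outer piece.

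The second step is to transfer this decomposition through the suspension. The non-smooth set of the boat front is exactly $\{(s,\theta):\beta_k(s)>0,\ |\theta|=r(\beta_k(s))\}$, the suspension of the slice cusp spheres; cutting along it splits the front into a suspended dome component $\{|\theta|<r(\beta_k(s))\}$ and finitely many suspended outer components, the latter extending continuously to the boundary region $\beta_k(s)=0$ where the slice is the original cup $\Lambda_{f_2}$. On any suspended outer component, $\nabla_\theta F(s,\theta)=\nabla_\theta z_R(\theta,\beta_k(s))\ne 0$ at every interior point, so $F$ has no critical point there.

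On the suspended dome, $\nabla_\theta F=0$ forces $\theta=0$ by uniqueness of the slice maximum, and for $s\ne 0$ we have
\[
\nabla_s F(s,0) \;=\; \bigl(-2|s| + z_{\max}'(b(|s|))\,b'(|s|)\bigr)\,\tfrac{s}{|s|},
\]
while $\nabla_s F(0,0)=0$ by smoothness. For $s\ne 0$ the bracket is strictly negative because $b'(|s|)<0$ and $z_{\max}'>0$, so the only critical point of $F$ is $(s,\theta)=(0,0)$, and the pointwise bound $F(s,\theta)\le -|s|^2+z_{\max}(b(|s|))\le z_{\max}(1)=F(0,0)$ identifies it as a maximum. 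For the cusp tangent planes, each slice $\psi_{\beta_k(s)}$ is a standard Reidemeister~1 model whose cusp tangent planes contain $\p_z$; the suspension only parametrizes slices over $s$ and shifts $z$ by the smooth function $f_1(s)$, neither of which can cancel the $\p_z$-component. Hence every cusp tangent plane of $B_{n,k}$ has a nontrivial $\p_z$-component and so is not parallel to the $(x_1,\dots,x_n)$-plane.

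The main obstacle I foresee is justifying the structural properties of $R1_{n-k}$ used in the first step---radial symmetry, the unique dome maximum, strict monotonicity of $z_{\max}(t)$, and that the outer pieces are critical-point free---from the pictorial definition in Figure~\ref{fig:min_to_max}. In practice this will likely require fixing an explicit Morse-family realization of the Reidemeister~1 isotopy along the lines of \cite{Bourgeois_Sabloff_Traynor_2015_LCGFCG} and choosing the bump $\beta_k$ compatibly, so that the monotonicity of $z_{\max}(t)$ and the sign of $b'(r)$ combine to give the required strict negativity of the bracket above.
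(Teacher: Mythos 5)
Your overall strategy is the same as the paper's: analyze the suspension slicewise, show that the only critical point of the boat front is at the origin and is a maximum, and deduce the cusp statement from the corresponding statement for the $(n-k)$-dimensional Reidemeister slice. On the critical-point count you are in fact more careful than the paper, whose proof simply asserts that critical points occur only where both $f_1$ and the $R1_{n-k}$-front have critical points; writing $F(s,\theta)=f_1(s)+z_R(\theta,\beta_k(s))$ and controlling the chain-rule cross term $\partial_t z_R\cdot\nabla\beta_k$ via monotonicity of the dome height is exactly what is needed to make that assertion honest. The structural hypotheses you impose on the slice isotopy (radial symmetry, a single inner-sheet critical point, critical-point-free outer sheets at every intermediate time, $z_{\max}$ non-decreasing, $b'<0$) are legitimate normalizations: the isotopy $R1_{n-k}$ realizing Figure~\ref{fig:min_to_max} is only specified up to choice of model, the paper already assumes the $S^{n-k-2}$-symmetric one, and a lips-type birth of the cusp sphere at the minimum followed by a monotone push of the dome realizes all of your conditions. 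So the obstacle you flag is real but removable, and your argument for the first two assertions of the proposition is sound.

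The step that is wrong as written is the justification of the cusp clause. Fronts of Legendrians in a $1$-jet space never have tangent planes containing $\partial_z$: along a Legendrian, $dz=\sum_i y_i\,dx_i$ with $y$ finite, so every tangent plane of a front (including the limiting planes along a cusp edge) is spanned by vectors of the form $\partial_{x_i}+y_i\partial_z$ and contains no $\partial_z$-direction; likewise, at a cusp point the tangent plane of the Legendrian itself contains a direction in the kernel of the front projection, i.e.\ a $\partial_{y}$-direction, not a $\partial_z$-direction --- in the standard model $(x,y,z)=(t^2,\tfrac{3}{2}t,t^3)$ the tangent at the cusp is $\partial_y$. So ``each cusp tangent plane contains $\partial_z$'' is false and cannot be the reason the planes fail to be parallel to the $(x_1,\dots,x_n)$-plane. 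The correct argument, which is the paper's, is that the cusps of the $R1_{n-k}$ slice front are not parallel to the $(x_{k+1},\dots,x_n)$-plane --- equivalently, the Legendrian tangent plane there contains a nonzero vector in the span of $\partial_{y_{k+1}},\dots,\partial_{y_n}$, whereas a plane parallel to the base contains none --- and the suspension only adjoins the $s$-directions, so the same holds for $B_{n,k}$. With this replacement your proof goes through.
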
 

\begin{proof}
The front of $B_{n,k}$ has a critical point only at points where both the front of $R1_{n-k}$ and $f_1$ have critical points. This happens at only one point, namely, $x_1 = \dots = x_n = 0$. By construction of $f_1$, $x_1 = \dots = x_n =0$ is assumed to be a maxima in the first $k$ coordinates. The Reidemeister move then converts the critical point to a maxima in the last $n-k$ coordinates, thus proving the proposition.

    We get that the cusps are not parallel to the $(x_1, \dots, x_n)$-plane as the $(n-k)$-dimensional Reidemeister front does not have cusps parallel to the $(x_{k+1}, \dots, x_n)$-plane. 
\end{proof}

\begin{remark} \label{DGAremark}
Consider a Reeb chord with one end on the pink dot in the left figure of Figure~\ref{fig:surface_boat_move}. This Reeb chord survives the boat move. One may be confused about how the grading of this Reeb chord as an element of the Legendrian dga changes, since the local Morse index is changing. However, note that the relationship between the grading and local Morse index is not the typical one 
\cite{Ekh_Etn_Sul_non_isotopic}, as the cusps we use do not rotate the Lagrangian planes the way typical cusps in the literature do. As such, one can check that the associated Maslov indices of loops of tangent vectors from the top to the bottom of the Reeb chord actually stay the same under our boat moves, preserving the grading.
\end{remark}

\section{Weinstein presentations of subdomains}\label{sec: main construction}

In this section, we prove Theorem~\ref{thm: general_antisurgery_intro} and Corollary~\ref{cor: explicit_P_loose_intro}. We then apply this theorem to construct several explicit examples of handle decompositions for Weinstein manifolds obtained via antisurgery. We conclude with some open questions.

\subsection{Proof of Main Theorem \ref{thm: general_antisurgery_intro} }

Recall that Theorem \ref{thm: general_antisurgery_intro} required that certain Reeb chords be non-degenerate and correspond to critical points of Morse functions. The following result shows that this condition can always be achieved. 
\begin{lemma}\label{lemma:non-degenerate_Reeb_chords}
    Let $\Lambda_1$ and $\Lambda_2$ be two Legendrians of a contact manifold $M$ which are $C^0$-close to one another; hence we can assume $\Lambda_2$ in the 1-jet space $J^1(\Lambda_1)$ of $\Lambda_1$.     
    Then, we may perturb $\Lambda_2$ by a $C^0$-small isotopy so that there are finitely many Reeb chords from $\Lambda_1$ to $\Lambda_2$ 
    and so that in a neighborhood of each Reeb chord endpoint on $\Lambda_2$, $\Lambda_2$ looks like $(x,df,f)$ for $f$ a local Morse function on $\Lambda_1$.
\end{lemma}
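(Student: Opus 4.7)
The plan is to place $\Lambda_2$ in general position inside the Weinstein neighborhood of $\Lambda_1$, which we identify contactomorphically with a neighborhood of the zero section in $J^1(\Lambda_1) = T^*\Lambda_1 \times \R_z$ (contact form $\alpha = dz - \lambda$, Reeb field $\p_z$). Under this identification $\Lambda_1$ becomes the zero section, and a Reeb chord from $\Lambda_1$ to $\Lambda_2$ corresponds precisely to an intersection point $q = (x_0, 0, z_0)$ of $\Lambda_2$ with the submanifold $R := \Lambda_1 \times (\R_z \setminus \{0\})$, with the chord endpoint on $\Lambda_2$ being $q$ itself.

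The key geometric observation is the following: if at each such $q$ the Legendrian $\Lambda_2$ satisfies two pointwise conditions---namely (T) transversality, $T_q\Lambda_2 \oplus T_qR = T_q J^1(\Lambda_1)$, and (S) that $T_q\Lambda_2$ is a graph over the base, $T_q\Lambda_2 \cap (\{0\} \oplus T^*_{x_0}\Lambda_1 \oplus \{0\}) = 0$---then near $q$ the Legendrian $\Lambda_2$ is the $1$-jet of a Morse function. Indeed, (S) forces the base projection $\Lambda_2 \to \Lambda_1$ to be a local diffeomorphism at $q$, so the Legendrian condition gives locally $\Lambda_2 = \{(x, df(x), f(x))\}$ for some function $f$ with $df(x_0) = 0$. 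A short computation using $T_q\Lambda_2 = \{(v, \mathrm{Hess}(f)(x_0)\,v, 0) : v \in T_{x_0}\Lambda_1\}$ together with $T_qR = T_{x_0}\Lambda_1 \oplus 0 \oplus \R\p_z$ then shows that (T) is equivalent to non-degeneracy of $\mathrm{Hess}(f)(x_0)$, i.e., to $f$ being Morse at $x_0$.

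It remains to achieve (T) and (S) at every intersection point by a $C^0$-small Legendrian isotopy. Both are open-dense pointwise conditions on Legendrians near $\Lambda_1$. Legendrian isotopies are realized as time-one maps of flows of compactly supported contact Hamiltonians on $M$, and the associated evaluation map is submersive enough that parametric transversality (Sard-Smale) supplies a $C^\infty$-small, hence $C^0$-small, Hamiltonian whose flow moves $\Lambda_2$ into the required general position. Compactness of $\Lambda_2$ then guarantees that $\Lambda_2 \cap R$ is a finite set, so there are finitely many non-degenerate Reeb chords, each carrying the claimed local Morse $1$-jet structure at its endpoint. The main subtle point is that (S) is not implied by (T): transversality controls the cotangent-fiber direction of $T_q\Lambda_2$, but a separate generic condition is required to rule out a vertical wavefront singularity of $\Lambda_2$ at the intersection point. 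Fortunately, both conditions are generic and are supplied simultaneously by the standard transversality machinery.
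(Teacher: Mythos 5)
Your proof is correct and follows essentially the same route as the paper: the paper phrases the two generic conditions as transversality of the Lagrangian projection $\Pi(\Lambda_2)\subset T^*\Lambda_1$ to the zero section (giving a non-degenerate Hessian) and to the cotangent fiber $T^*_q\Lambda_1$ (giving graphicality), which are exactly your conditions (T) and (S) read through the projection $J^1(\Lambda_1)\to T^*\Lambda_1$. Your explicit remark that graphicality is an independent generic condition matches the paper's second perturbation step (moving chord endpoints off cusps), so the two arguments differ only in packaging.
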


\begin{proof}
By definition, $J^{1}(\Lambda_1) = T^{*}\Lambda_1 \times \mathbb{R}$ and Reeb chords between $\Lambda_1, \Lambda_2$ project to double points under the Lagrangian projection $\Pi: J^1 (\Lambda_1) \to T^{*}(\Lambda_1)$. We may perturb $\Lambda_2$ by a $C^{\infty}$-small Legendrian isotopy so that these double points are transverse in $T^*\Lambda_1$. Let $q \in \Lambda_1 \subset T^*\Lambda_1$ denote one of these double points. Then we may further perturb $\Lambda_2$ so that $\Pi(\Lambda_2)$ is additionally transverse to $T^*_q \Lambda_1$ at $q$ (in addition to being transverse to $\Lambda_1$), without creating any new double points. Since $\Pi(\Lambda_2)$ is transverse to $T^*_q \Lambda_1$, we have that $\Pi(\Lambda_2)$ is given by $(x_1, \cdots, x_n, \partial_1 f(x), \cdots, \partial_n f(x))$ for some function $f$ with critical point at $q$. Furthermore, since the projection $\Pi(\Lambda_2)$ is transverse to $\Lambda_1$, this critical point is non-degenerate and hence is a Morse critical point. In particular, $q$ corresponds to a Reeb chord between $\Lambda_1, \Lambda_2$ with the desired property. See Figure \ref{fig:not_at_cusp} for an example of this perturbation.

\end{proof}

\begin{figure}
    \centering
    \includegraphics[width=5.95 cm]{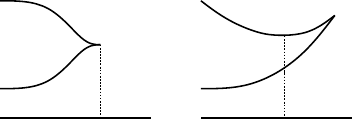}
    \caption{Front projections of two Legendrian $\Lambda_1$ (zero-section) and $\Lambda_2$ (with the cusp).   Left figure: there is a Reeb chord (depicted by the dotted line) between the Legendrian $\Lambda_2$ and $\Lambda_1$ with endpoint on the cusp of $\Lambda_2$. Right figure: after Legendrian isotopy of $\Lambda_2$, the Reeb chord between $\Lambda_1, \Lambda_2$ has endpoint on a smooth branch of $\Lambda_2$ which is locally described by the graph of a Morse function on $\Lambda_1$ (with minimum corresponding to the Reeb chord endpoint).   
}
    \label{fig:not_at_cusp}
\end{figure}

\begin{remark} \label{rem: maximas are minimas}
    In the following Lemma \ref{lem:one surgery on lambda f is enough}, the Reeb chords we investigate may be interpreted as lying between $\Lambda$ and $\Lambda_-$ or between $\Lambda$ and $\Lambda_+$. This is relevant when we refer to the Reeb chords as being represented as maxima or minima of a Morse function in a suitable neighborhood; what is maxima with regard to a function on $\Lambda_-$ will be a minima with regard to a function on $\Lambda_+$. Here, it should be understood that we can only handleslide $\Lambda$ past $\Lambda_-$ at Reeb chords represented by maxima with respect to a locally-defined Morse function on $\Lambda_+$ (alternatively mimima with respect to a locally-defined Morse function on $\Lambda_-$). The following lemma uses suitable boat moves to turn all critical points into such maxima (resp. minima). 
\end{remark}

\begin{lemma}\label{lem:one surgery on lambda f is enough}	
Suppose $\Lambda$ is a Legendrian submanifold in a contact manifold $(M_0,\xi_0)$, and that $(-1)$-surgery on $\Lambda$ produces contact manifold $(M',\xi')$. Suppose $\Lambda_+$ and $\Lambda_-$ are a pair of $n$-dimensional Legendrian submanifolds which are completely parallel (so that one is an Reeb $\epsilon$-pushoff of the other) in $(M_0,\xi_0)$ but not necessarily parallel in $(M',\xi')$, ie. they may be distinctly linked with $\Lambda$. 

Let $(M,\xi)$ be the $(2n+1)$-dimensional contact manifold obtained from $(M_0, \xi_0)$ by $(-1)$-surgeries along $\Lambda$ and $\Lambda_-$, and a $(+1)$-surgery along $\Lambda_+$. Then, there exists a Legendrian submanifold $\Lambda_f \subset (M_0, \xi_0)$ such that $(M,\xi)$ can be obtained by only $(-1)$-surgery along the components $\Lambda_f$. 
\end{lemma}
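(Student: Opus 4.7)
The strategy I would pursue is to exhibit $\Lambda_f$ as an explicit modification of $\Lambda$ obtained by a sequence of boat moves and Legendrian handleslides, after which $\Lambda_+$ and $\Lambda_-$ can be recognized as a canceling pair and removed. The intuition is that, although $\Lambda_+$ and $\Lambda_-$ are Reeb $\epsilon$-pushoffs in $(M_0,\xi_0)$ and would therefore cancel as a canceling pair if $\Lambda$ were not present, the Reeb cylinder joining them may be ``pierced" by Reeb chords coming from $\Lambda$, so $(-1)$-surgery on $\Lambda$ can obstruct the naive cancellation. Transferring this obstruction from $\Lambda$ into a modified Legendrian $\Lambda_f$ via handleslides will free the pair $(\Lambda_+,\Lambda_-)$ to cancel.

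First, I would invoke Lemma \ref{lemma:non-degenerate_Reeb_chords} to put the three Legendrians into general position: all Reeb chords between $\Lambda$ and $\Lambda_\pm$ are non-degenerate and correspond to Morse critical points of a local height function. Because $\Lambda_+$ is an $\epsilon$-Reeb pushoff of $\Lambda_-$, the Reeb chords from $\Lambda$ to $\Lambda_+$ are in natural bijection with those from $\Lambda$ to $\Lambda_-$, differing in length by $\epsilon$, so it suffices to enumerate the chord endpoints on one of them. Next, at each such Reeb chord, I would apply an $(n,n-k)$-boat move (Definition \ref{defn:boat move}) to the portion of $\Lambda$ surrounding the chord endpoint. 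By Proposition \ref{prop:boat_move_makes_max}, the index-$k$ critical point is converted to a maximum relative to a suitable local Morse function, which, as emphasized in Remark \ref{rem: maximas are minimas}, is exactly the configuration required in order to slide $\Lambda$ past the $(-1)$-surgery Legendrian $\Lambda_-$.

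With this preparation, I would handleslide $\Lambda$ over $\Lambda_-$ at each of the chord sites, using the $(-1)$-handleslide move of Figure \ref{fig:handleslides} which introduces a circle of cusp singularities on $\Lambda$. Call the resulting Legendrian $\Lambda_f\subset (M_0,\xi_0)$. Each handleslide preserves the contactomorphism type of the surgered ambient manifold, so
\[
 (-1)\text{-surgery on } (\Lambda \cup \Lambda_-) \;\cong\; (-1)\text{-surgery on } (\Lambda_f \cup \Lambda_-).
\]
Crucially, the slides are arranged so that each Reeb chord from $\Lambda$ that pierced the Reeb cylinder between $\Lambda_-$ and $\Lambda_+$ has now been absorbed into $\Lambda_f$: the linking data has been transferred off of $\Lambda_\pm$ and onto $\Lambda_f$. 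Consequently, in the ambient manifold $\Lambda_+$ and $\Lambda_-$ are now genuine Reeb parallel copies relative to $\Lambda_f$, forming an honest canceling pair.

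Finally, I would cancel $(\Lambda_+,\Lambda_-)$: the $(+1)$-surgery on $\Lambda_+$ together with the $(-1)$-surgery on the Reeb pushoff $\Lambda_-$ yields the identity up to contactomorphism, leaving only $(-1)$-surgery on $\Lambda_f$ to produce $(M,\xi)$. The main obstacle, in my view, is Step 4 — certifying that after the slides, the remaining Reeb chord configuration genuinely makes $\Lambda_\pm$ into a bona fide canceling pair in the complement of $\Lambda_f$. This requires a careful bookkeeping of which chord endpoints get transferred to $\Lambda_f$ and a verification that the boat move is applied on the correct side of $\Lambda_\pm$ so that the slide relocates the chord rather than duplicating it, which is precisely where the interplay between the boat moves of Section \ref{sec: Legendrian_moves} and the handleslide calculus of Section \ref{sec:legendrian moves} is essential.
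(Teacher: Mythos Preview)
Your proposal is correct and follows essentially the same approach as the paper: perturb via Lemma~\ref{lemma:non-degenerate_Reeb_chords} to get Morse-type chords, apply boat moves to convert each obstructing critical point to a local maximum, handleslide $\Lambda$ over $\Lambda_-$ at each such maximum, and then cancel the pair $\Lambda_\pm$. The paper resolves what you correctly flag as the ``main obstacle'' by processing the chords one at a time in order of increasing critical value (shortest chord first), interleaving each boat move and handleslide with a further isotopy of $\Lambda_-$ toward $\Lambda_+$; this ordering is what guarantees that each handleslide occurs along an uninterrupted chord and that no new obstructing Reeb chords are introduced along the way.
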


\begin{proof}
	To obtain $\Lambda_f$, our goal will be to cancel $\Lambda_{+}$ with $\Lambda_{-}$ in a surgery diagram of $(M,\xi)$. In order to do so, we need $\Lambda_{+}$ and $\Lambda_{-}$ to be completely parallel in $(M', \xi')$, ie. they must be identically linked with $\Lambda$ in $(M_0, \xi_0)$. We will parallelize $\Lambda_{+}$ and $\Lambda_{-}$ by performing a sequence of Legendrian isotopies that preserve the resulting surgered contact manifold $(M, \xi)$, see the example of Figure~\ref{fig:recipe_slides}.

\begin{figure} [h!]
    \centering
    \begin{tikzpicture}
    \node at (0,0) {\includegraphics[width=10.6 cm]{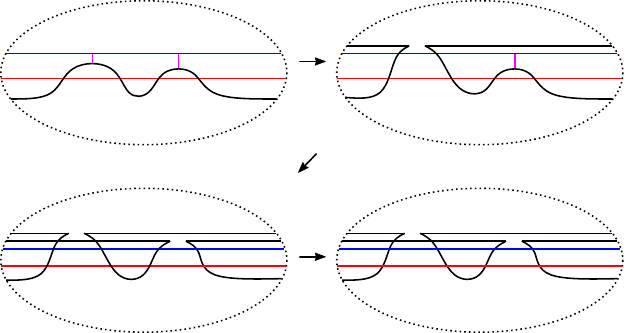}};
    \node[color=blue] at (-3,2.3) {$\Lambda_-$};
    \node[color=red] at (-3.4,1.1) {$\Lambda_+$};
    \node at (-4.3,1) {$\Lambda$};
    \node at (3,-0.8) {$\Lambda_f$};
    \end{tikzpicture}    
    \caption{Assuming they are parallel outside of the dotted circle, the red Legendrian $\Lambda_+$ and blue Legendrian $\Lambda_-$, where we perform a $(+1)$ and $(-1)$ Legendrian surgery respectively, are made parallel by handlesliding the black Legendrian $\Lambda$ over $\Lambda_-$ at its maxima. The $\Lambda_+$ and $\Lambda_-$ are then cancelled, leaving only the new black Legendrian $\Lambda_f$.} 
    \label{fig:recipe_slides}
\end{figure}

\begin{figure} [h!]
    \centering
    \begin{tikzpicture}
    \node at (0,0) {\includegraphics[width=10.45 cm]{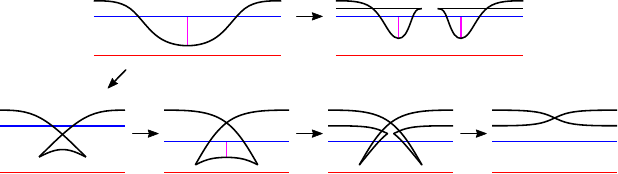}};
    \node[color=blue] at (-4,1.2) {$\Lambda_-$};
    \node[color=red] at (-4,0.6) {$\Lambda_+$};
    \node at (-3.6,1.8) {$\Lambda$};
    \end{tikzpicture}    
    \caption{Top row: doing $-1$ handleslide of $\Lambda$ over $\Lambda_-$ at an index $1$ critical point (of the height difference function) creates two more Reeb chords. Bottom row: first doing a boat move at the index $1$ critical point (a first Reidemeister move) and then a $-1$ handleslide over $\Lambda_-$ removes all Reeb chords. We observe that the resulting Legendrian is a crossing connect sum of $\Lambda$ and $\Lambda_-$ which appear when doing $+1$ handleslide, see Figure~\ref{fig:handleslides}.}     
    \label{fig:no_new_Reeb}
\end{figure}

	By Lemma~\ref{lemma:non-degenerate_Reeb_chords}, we may perturb $\Lambda$ by a Legendrian isotopy so that we have finitely many Reeb chords, and in a neighborhood of each Reeb chord, the height difference is Morse function.
	
	If we attempt to isotope $\Lambda_{-}$ towards $\Lambda_{+}$ by pushing $\Lambda_{-}$ in the $z$-coordinate in the front, we are obstructed whenever $f$ has a Morse critical point on a part of $\Lambda$ between $\Lambda_{-}$ and $\Lambda_{+}$. These critical points correspond to non-degenerate Reeb chords between $\Lambda_-$ and $\Lambda$ and must be removed. We do so one at a time: we always work with the smallest remaining critical value, that is, the shortest remaining Reeb chord. 

	Consider the critical point, say $q$, with least critical value. If $q$ has Morse index 0, then in the front projection, we see a local maximum (see Remark \ref{rem: maximas are minimas}). We remove it by performing a handleslide of $\Lambda$ over $\Lambda_{-}$ along the Reeb chord between $\Lambda_-$ and $\Lambda$ corresponding to $q$. We then isotope $\Lambda_{-}$ towards $\Lambda_{+}$. See Figure~\ref{fig:recipe_slides}. We note that no new Reeb chords are created in the process. 
	
	\begin{figure} [h!]
		\centering
  \begin{tikzpicture}
    \node at (0,0) {\includegraphics[width=14.4 cm]{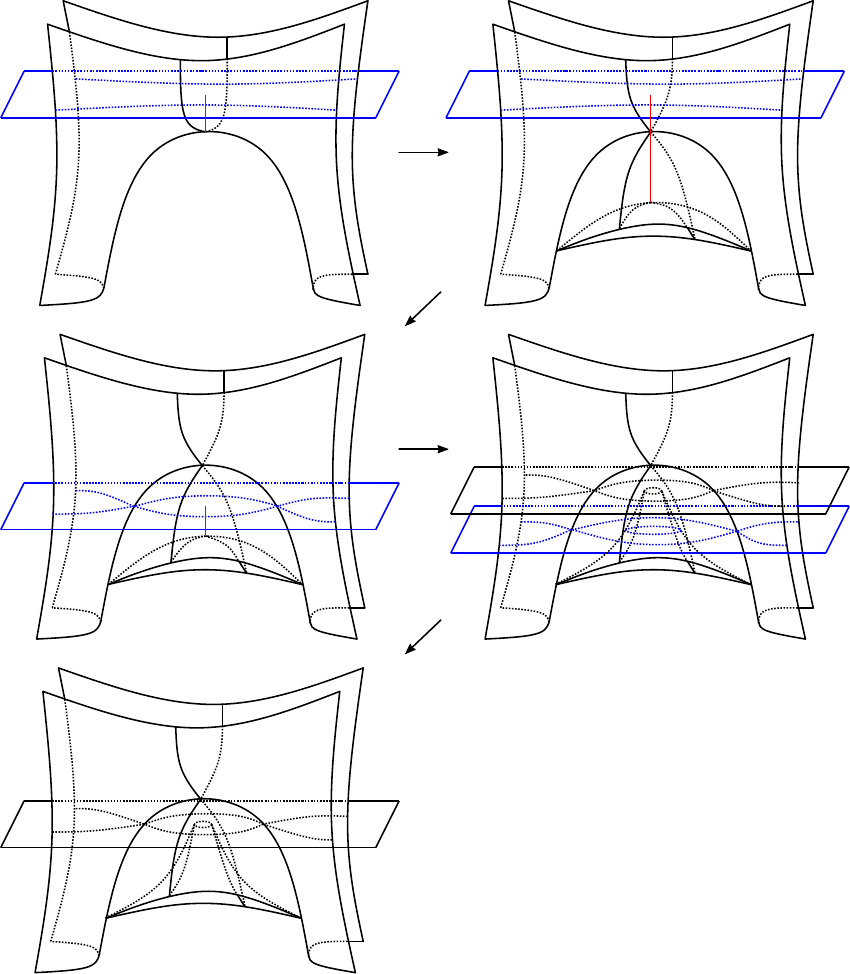}};
    \node[color=blue] at (-6.8,5.8) {$\Lambda_-$};
    \node at (-4.5,5) {$\Lambda$};
    \end{tikzpicture} 
		
		\caption{The process of performing a boat move locally for an index 1 critical point on a surface Legendrian to enable a handleslide:\\\hspace{\textwidth}
			\textbf{(1.)} The black Legendrian $\Lambda$ has an index 1 critical point at which there is a red Reeb chord to the blue Legendrian $\Lambda_-$. \\\hspace{\textwidth}
			\textbf{(2.)} Perform a boat move at the critical point, the Reeb chord now meets the $\Lambda$ at a maximum in the front diagram.\\\hspace{\textwidth}
			\textbf{(3.)} Isotope $\Lambda_-$ downwards, the Reeb chord is now uninterrupted.\\\hspace{\textwidth}
			\textbf{(4.)} Perform a handleslide along the Reeb chord.\\\hspace{\textwidth}
			\textbf{(5.)} Isotope the $\Lambda_-$ past the boat.\\\hspace{\textwidth}} 
		\label{fig:boat_handleslide}
	\end{figure}
	
	If $q$ has Morse index $n-k$, for $0 \leq k < n$, we first perform a $(n, k)$-boat move to $\Lambda$, as defined in Definition~\ref{defn:boat move}. (If the critical point has Morse index $n$, then it is a minimum in the front projection, so we  perform an $(n, 0)$-boat move, which is just an $n$-dimensional first Reidemeister move). By Proposition~\ref{prop:boat_move_makes_max}, the $(n,k)$-boat move is a Legendrian isotopy that converts the index $n-k$ critical point $q$ to a maximum, say $q'$, in the front projection. We now have a Reeb chord, say $\gamma_{q'}$, between $\Lambda$ and $\Lambda_{-}$ corresponding to $q'$. Note that Proposition~\ref{prop:boat_move_makes_max} also implies no other Reeb chords are created in this process. We isotope $\Lambda_{-}$ towards $q'$ until $\gamma_{q'}$ is uninterrupted. Next, we handle slide $\Lambda$ over $\Lambda_{-}$ along the Reeb chord $\gamma_{q'}$ at $q'$. We can then isotope $\Lambda_{-}$ further towards $\Lambda_{+}$. This process is fully illustrated in Figure~\ref{fig:no_new_Reeb} and Figure~\ref{fig:boat_handleslide}. 
 
	We repeat this process until no obstructing non-degenerate Reeb chords remain and $\Lambda_{+}$ and $\Lambda_{-}$ are in cancelling position. Once cancelled, we are left with a single Legendrian $\Lambda_f$ in the surgery diagram of $(M,\xi)$. In summary, $\Lambda_f$ corresponds to $\Lambda$ in the following way: for each Reeb chord $\gamma$ from $\Lambda$ to $\Lambda_-$ with local index $n-k$, we applied an $(n, k)$-boat move to $\Lambda$ and did a cusp connected sum with $\Lambda_-$.
\end{proof}

 \begin{remark}
 We note that if we handleslide over a Reeb chord corresponding to an index $k$ critical point without first doing a boat move, then new Reeb chords are created; see the top row of Figure~\ref{fig:no_new_Reeb}. By first doing the boat move, which does not create any new Reeb chords but only changes the local index of the existing Reeb chord, we ensure that handlesliding removes that Reeb chord without creating any new chords. 
 \end{remark}

\begin{remark}
Although Lemma \ref{lem:one surgery on lambda f is enough} is stated in the language of contact manifolds and contact $(\pm 1)$-surgeries, the result also holds
with Weinstein homotopies. We recall that $(-1)$-surgeries correspond to Weinstein handle attachment while $(+1)$-surgeries correspond to handle removal, and in general do not produce fillable contact manifolds. However, in our case, all surgeries moves are handle-slides are over the $(-1)$-Legendrian and hence we can view the $(+1)$-Legendrian as a placeholder for the boundary of the Lagrangian disk, which will be removed at the last step. Hence all our contact moves are really Weinstein homotopies. 

\end{remark}

\begin{remark}
    In the next two proofs, we abuse terminology and refer interchangeably to Weinstein diagrams and Weinstein domains. So, we sometimes say ``attach a handle to the Weinstein diagram". We will also refer to the carving out of a Lagrangian disk as   ``antisurgery" along its boundary Legendrian. 
    We hope this abuse of terminology will not confuse the reader but, in fact, make the proof easier to read. 
\end{remark}

\begin{proof}[Proof of Theorem~\ref{thm: general_antisurgery_intro}]\label{prf: main construction}
Recall that $L \subset X$ are of the form $X = T^*D^n \cup H_i^n$ and $L = D^n \subset T^*D^n$. Then Weinstein sector $X \setminus L$ is obtained by antisurgery along the Legendrian knot $\partial L$ which corresponds to the unknot $\partial D^n \subset T^*D^n$. Now onward, we will denote the knot $\partial L$ by $\Lambda_+$, and depict it in red in all figures. Let $\Lambda=\cup_i\Lambda_i$ denote the link consisting of all the attaching spheres of the $H^n_i$. 

	\begin{figure}[h]
		\centering
  \begin{tikzpicture}
    \node at (0,0) {\includegraphics[width=14 cm]{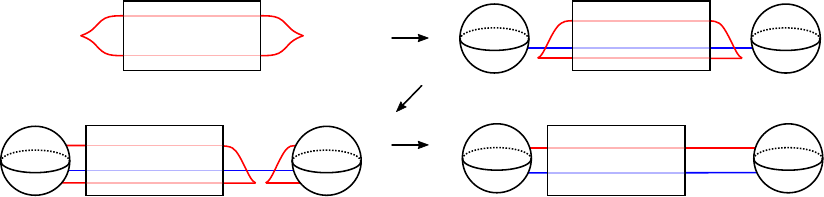}};
    \node[color=red] at (-5.5,0.5) {$\Lambda_+$};
    \node[color=red] at (5.4,-0.5) {$\Lambda_+$};
    \node[color=blue] at (5.4,-1.7) {$\Lambda_-$};
    \node at (-6,1.9) {\textbf{A}};
    \node at (0.8,1.9) {\textbf{B}};
    \node at (0.8,-0.2) {\textbf{C}};
    \end{tikzpicture} 
		\caption{A cartoon depiction of Diagrams A, B, and C. In the top left, we see a red Legendrian unknot in the boundary of a Weinstein domain. This unknot may be linked with attaching handles of the domain away from its cusps, represented by the black rectangle. This diagram is related to the figure on the bottom right by the addition of a cancelling pair and a single handleslide. }
		\label{fig:unknot_antisurgery}
	\end{figure}

	We may isotope $\Lambda_+$ so that the front projection of $\Lambda$ is disjoint from the cusps of $\Lambda_+$, giving us a surgery diagram, say ``Diagram A" for $\partial(X \setminus L)$, as depicted in the top left of Figure~\ref{fig:unknot_antisurgery}. We introduce a cancelling pair to Diagram A---an $(n-1)$-handle and a critical handle $\Lambda_-$ so that $\Lambda_-$ is a parallel pushoff of the bottom arc of $\Lambda_+$---to obtain a new surgery diagram, say ``Diagram B", again for $\partial(X \setminus L)$, as depicted in top right of Figure~\ref{fig:unknot_antisurgery}. We observe that Diagram B is in turn equivalent to a ``Diagram C" of the form of the bottom right of Figure~\ref{fig:unknot_antisurgery}  where $\Lambda_+$ and $\Lambda_-$ both traverse across the $n-1$ handle exactly once each.
	
	We note that, in Diagram C,  $\Lambda_+$ and $\Lambda_-$ are parallel except in how they are linked with $\Lambda$. So, we can apply Lemma~\ref{lem:one surgery on lambda f is enough} to obtain a surgery presentation that contains only $(-1)$-surgery along Legendrian submanifolds, say ``Diagram D". Then, this Diagram D corresponds to a Weinstein handle diagram where each $(-1)$-surgery corresponds to a critical handle attachment.
	
	Note that the Weinstein diagram Diagram D has exactly one more index $n-1$ handle than the Weinstein presentation for $X$. Further, if we denote the attaching spheres of the index $n$ handles $H_i$ in Diagram D by $\Lambda_i'$, resp., the construction in Lemma~\ref{lem:one surgery on lambda f is enough} implies that the $\Lambda_i'$ are exactly as described in the theorem statement.
\end{proof}

As a corollary to this theorem, we explain how to describe the $P$-loose Legendrian unknot in $\mathbb{R}^{2n-1} \subset S^{2n-1} = \partial B^{2n}$. To do so, we first describe a front diagram of the knot in $S^{n-1} \times \mathbb{R}^{n} \subset \partial(B^{2n} \cup H^{n-1})$ and then attach a flexible handle to make the ambient space $\partial B^{2n}$.

\begin{proof}[Proof of Corollary~\ref{cor: explicit_P_loose_intro}]

Recall from Section~\ref{sec: antisurgery construction of $P$-loose legendrians} and \cite{Lazarev_Sylvan_2023_PLWS} that the $P$-loose Legendrian unknot, 
$$
\Lambda_P \subset S^{2n-1} = \partial B^{2n}
$$
is obtained as follows. First, to construct a $P$-Moore space, consider the CW complex $S^1 \cup_P D^2$, the result of attaching $D^2$ to $S^1$ along the degree $p$ map $\partial D^2 = S^1 \rightarrow S^1$. If $n \ge 5$, then this CW complex embeds into $S^{n-1}$, as observed in \cite{Abouzaid_Seidel_2010_ASMHR}, and we let $U$ be a neighborhood of this CW complex. Next,  we observe that $U$ has a Morse function (with gradient outward pointing near the boundary of $U$) that has three critical points, one of index 0 and 1 for the $S^1$ and one of index 2 for the $D^2$.

Next, we carve out a disk $D_U \subset T^*D^n$ from $T^* D^n$ to obtain $T^*D^n \setminus D_U$ which as an unstopped Weinstein domain and is equivalent to $B^{2n} \cup H^{n-1}$. Then, 
$\Lambda_P$ is $$\partial D\subset\partial (B^{2n} \cup H^{n-1}\cup H_{flex}),$$
where $H_{flex}$ is a flexible Weinstein handle attached along a Legendrian which is loose in the complement of $\partial D \subset B^{2n} \cup H^{n-1}$. Additionally, $H_{flex}$ is in cancelling position with $H^{n-1}$.  We will explicitly construct the Weinstein handle decomposition of this $B^{2n} \cup H^{n-1}$ to obtain an explicit front diagram for $\Lambda_P$.

We begin by using the construction from Proposition~\ref{prop: D_U Legendrian}. In the boundary $S^{2n-1} = \partial B^{2n}$, we consider a pair of cancelling contact surgeries along parallel $(n-1)$-dimensional Legendrian unknots. We label the Legendrian unknot corresponding to the $(-1)$-surgery $\Lambda$ and the Legendrian unknot corresponding to the $(+1)$-surgery $\Lambda_+$. $\Lambda_+$ and $\Lambda$ represent Legendrian boundaries $\partial D_U$ and $\partial D$, respectively. Note that there exists a $P$-Moore space $U \subset \Lambda_+$ by our assumption. We perturb $\Lambda_+$ past the $\Lambda$ by ``pushing" the subdomain $U$ so that $\Lambda_+$ and $\Lambda$ are no longer parallel. Let us refer to this surgery diagram by ``Diagram E".

\begin{figure} [h!]
	\centering
 \begin{tikzpicture}
    \node at (0,0) {\includegraphics[width=14cm]{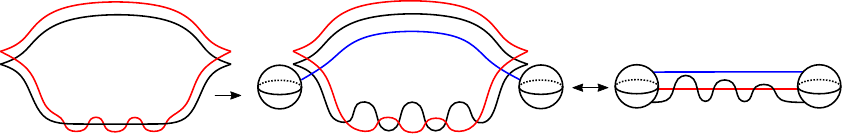}};
    \node[color=red] at (-3.8,1.3) {$\Lambda_+$};
    \node at (-5,0.5) {$\Lambda$};
    \node[color=red] at (4.5,-0.9) {$\Lambda_+$};
    \node[color=blue] at (5, 0.3) {$\Lambda_-$};
    \node[color=blue] at (0, 0.2) {$\Lambda_-$};
    \node at (-5,-1.5) {\textbf{E}};
    \node at (0,-1.5) {\textbf{F}};
    \node at (5,-1.5) {\textbf{G}};    
    \node at (6,-0.9) {$\Lambda$};
    \end{tikzpicture} 
	\caption{A cancelling pair of handles, represented by a pair of spheres and a blue Legendrian, is added just below the cusps of the linked red and black Legendrians. This diagram is equivalent to the final one via two handleslides.} 
	\label{fig:insert_cancelling_handles}
\end{figure}

\begin{figure} [h!]
	\centering
 \begin{tikzpicture}
    \node at (0,0) {\includegraphics[width=8.5cm]{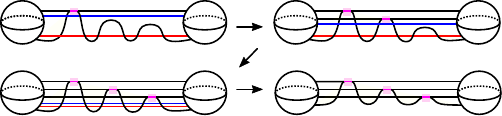}};
    \node at (2.3,-1.2) {\textbf{H}};    
    \end{tikzpicture} 
	\caption{Applying Lemma~\ref{lem:one surgery on lambda f is enough} to Diagram G involves three instances of a boat move isotopy followed by a handleslide. Then $\Lambda_+$ and $\Lambda_-$ are completely parallel, thus the respective surgeries on these knots cancel.} 
	\label{fig:3_boats}
\end{figure}

To Diagram E, we insert a cancelling pair of handles consisting of an $(n-1)$-handle, $H^{n-1}$ and an $n$-handle, $H^n$, just below the cusps of $\Lambda_{+}$ and $\Lambda$, see Figure~\ref{fig:insert_cancelling_handles}. Note that as a contact surgery curve on the boundary, the $n$-handle attachment corresponds to a $(-1)$-surgery along a Legendrian knot. Let us denote the Legendrian knot corresponding to this $(-1)$-surgery by $\Lambda_{-}$, and refer to this Weinstein diagram as ``Diagram F". We see from Figure~\ref{fig:insert_cancelling_handles} that Diagram F is equivalent to ``Diagram G" after some handleslides and Legendrian isotopies. 

We are now set up exactly as in the proof of Theorem~\ref{thm: general_antisurgery_intro}, namely Diagram G is of the form of Diagram C in the proof of Theorem~\ref{thm: general_antisurgery_intro}. So, we similarly apply Lemma~\ref{lem:one surgery on lambda f is enough} to obtain the required Weinstein diagram, say ``Diagram H". Since the $P$-Moore space has three critical points, $\Lambda$ in Diagram G also has three critical points, of index 0, 1, and 2 when considering the Morse function $f$ of Lemma 4.3. Thus in applying the Lemma, we perform three instances of a boat move and a cusp connect sum to $\Lambda$ in Diagram G to obtain Diagram H, see Figure~\ref{fig:3_boats}.

\begin{figure} [h!]
	\centering
  \begin{tikzpicture}
    \node at (0,0) {\includegraphics[width=12.4 cm]{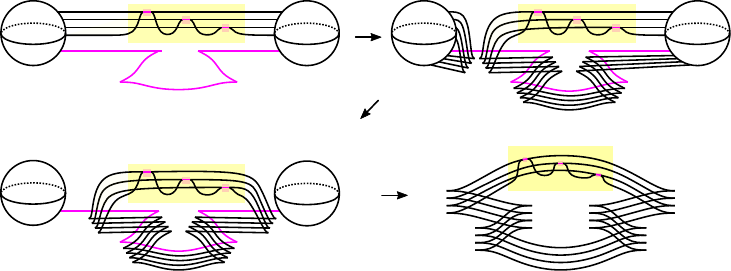}};
    \definecolor{mycolor}{RGB}{255,0,255}
    \node[color=mycolor] at (-4.7,0.9) {$\Lambda_{flex}$};
    \node at (-6.5,2.3) {\textbf{I}};
    \node at (-4.7,2.4) {$\Lambda$};
    \node at (5.3,-1.8) {$\Lambda_P$};
    \node at (1.8,-0.4) {\textbf{J}};
    \end{tikzpicture} 
	\caption{The final step in the construction: in the complement of the black Legendrian $\Lambda$ which traverses the $(n-1)$-handle some number of times, we attach a flexible handle along the pink loose Legendrian $\Lambda_{flex}$. We then repeatedly slide $\Lambda$ over $\Lambda_{flex}$ to detach it from the $(n-1)$-handle. Finally, we cancel $\Lambda_{flex}$ with the $n-1$-handle.} 
	\label{fig:flexible_step}
\end{figure}

Finally, to obtain $\Lambda_P$ in $B^{2n}$, that is, to make the ambient manifold $B^{2n}$, we attach a flexible handle, $H_{flex}$, to cancel out $H^{n-1}$ in Diagram H. This amounts to attaching a loose Legendrian $\Lambda_{flex}$ that winds around the $(n-1)$-handle once, and is in the complement of $\Lambda$ in Diagram H. This gives us ``Diagram I" which is depicted in the top left of Figure~\ref{fig:flexible_step}, with $\Lambda_{flex}$ denoted in pink.

Next, in Diagram I, we slide $\Lambda$ repeatedly over $H_{flex}$ until $\Lambda$ no longer passes through the $H^{n-1}$. We then cancel $H^{n-1}$ with $H_{flex}$. We are left with a surgery diagram, say ``Diagram J", that consists of a single Legendrian $\Lambda_P$ in $B^{2n}$. This process is illustrated in Figure~\ref{fig:flexible_step}. We see that $\Lambda_P$ consists of four loose Legendrian unknots which are completely parallel away from a bounded region where they are linked (in a way that depends on p) and are connected via three boat moves and cusp connected sum gluings.

As a Weinstein diagram, Diagram J depicts the Weinstein sector $(B^{2n}, \Lambda_P)$, where $\Lambda_P$ is a $P$-loose Legendrian unknot.
\end{proof}

\subsection{Explicit examples} 
We now construct several exotic Weinstein manifolds as applications of the construction from Theorem~\ref{thm: general_antisurgery_intro} and Corollary~\ref{cor: explicit_P_loose_intro}. In all these examples, we consider antisurgery on the Lagrangian disk obtained by perturbing the boundary $S^{n-1} = \partial D^n$ of the zero section $D^n \subset T^*D^n$ in a neighbourhood $U \subset S^{n-1}$, as in the construction of $P$-loose Legendrians (see Section~\ref{sec: antisurgery construction of $P$-loose legendrians}).

\begin{examples} 
Suppose $U= D^{n-1} \subset S^{n-1}$ is a  disk. Then, we may choose the Morse function $g : U \to \R$ to have a single critical point of index 0. So, we obtain $\Lambda_{U}$ with a single maximum. Applying the construction from Theorem~\ref{thm: general_antisurgery_intro}, we obtain a Legendrian  $\Lambda_{f}$ by a single handleslide (see Figure~\ref{fig:example_disk}). The resulting Legendrian, $\Lambda_f$, is a standard Legendrian unknot in the complement of the $(n-1)$-handle, $H^{n-1}$.

\begin{figure} [h!]
    \centering
    \begin{tikzpicture}
    \node at (0,0) {\includegraphics[width=8.95 cm]{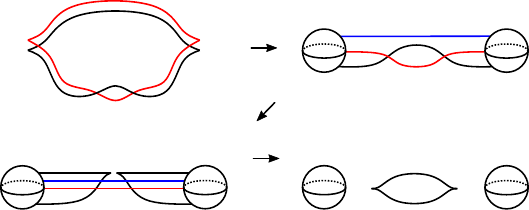}};
    \node[color=red] at (-3.8,1.8) {$+1$};
    \node at (-3.8,0.2) {$\Lambda$};
    \node at (2,-0.9) {$\Lambda_f$};
    \end{tikzpicture}
    \caption{Our construction applied to the case where $U=D^k$: We first add a cancelling pair and slide the red and black Legendrians over the blue Legendrian. Then at the maximum on the black Legendrian, we slide over the blue Legendrian. Passing the cusps over the $n-1$ handle and cancelling the red with the blue, we obtain a max tb unknotted sphere in the complement of the $n-1$ handle. } 
    \label{fig:example_disk}
\end{figure}

Here, $\Lambda_{f}$ is not loose, and is the Legendrian unknot in the subcritical domain $B^{2n} \cup H^{n-1}$. Indeed, we have constructed the Weinstein diagram of $T^*D^n \cup H^{n-1}$, i.e. $T^*D^n$ with a subcritical handle attached along a subcritical isotropic sphere in a Darboux chart. $\Lambda_+$ is the Legendrian unknot and bounds the Lagrangian unknot; carving out the Lagrangian unknot is equivalent to attaching an index $n-1$ handle. 
\end{examples} 

\begin{examples} 

Let $U \subset S^{n-1}$ be the disconnected union of codimension zero submanifold $U'$ and a disk $D^{n-1}$. In this case, after cancellation, we expect the remaining Legendrian to be loose. This is because $D_{U' \coprod D^{n-1}}^n$ is Lagrangian isotopic to  $D_{U'}^n \natural T^*_0 D^n,$
and by \cite{Lazarev_grothendieck_group}, $T^*D^n \setminus (D_{U'} \natural T^*_0 D^n)$ is obtained from the subcritical sector $T^*D^n \setminus (D_{U'} \coprod T^*_0 D^n)$
by attaching a flexible handle. 
We can see also see this from our explicit construction as follows. 

\begin{figure} [h!]
    \centering
    \begin{tikzpicture}
    \node at (0,0) {\includegraphics[width=14.5cm]{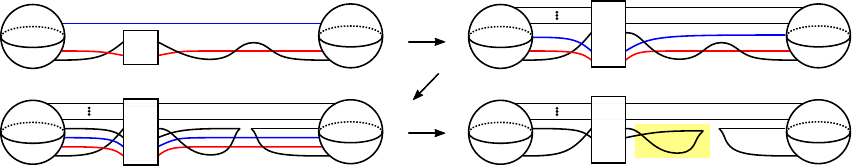}};
    \node[color=red] at (-2.2,0.75) {$+1$};
    \node[color=blue] at (-2.2,1.2) {$-1$};
    \node at (-3.6,0.1) {$\Lambda$};
    \node at (5.5,-1.6) {$\Lambda_f$};
    \end{tikzpicture}
    \caption{Our construction applied to the case where $U$ is a union of some submanifold (the rectangular region) and a disk (the maximum on the right). We first use boat moves and handleslides to remove all obstructing Reeb chords coming from the boxed region. A single handleslide then puts the red and blue Legendrians in a cancelling position. The yellow highlighted region is a loose chart.} 
    \label{fig:disconnected_example}
\end{figure}

Since $U$ is disconnected, we may isotope $\Lambda_{U}$ so that the Reeb chords coming from $g|_{U'}$, the $U'$-perturbation, are shorter than the Reeb chord coming from the maximum of $g|_{D^{n-1}}$, the $D^{n-1}$-perturbation (see Figure~\ref{fig:disconnected_example}). 

Next, we apply the construction of Theorem~\ref{thm: general_antisurgery_intro} to obtain the Legendrian attaching sphere $\Lambda_{f}$ as follows. A combination of boat moves and handleslides will first remove all the critical points coming from $U'$. Then, a single handleslide will remove the critical point coming from $D^{n-1}$ (see Figure~\ref{fig:disconnected_example}). We are now in position to cancel $\Lambda_-$ and $\Lambda_+$ as in the proof of Theorem~\ref{thm: general_antisurgery_intro}. 

After this cancellation, we see that the resulting Legendrian knot $\Lambda_f$ has a loose fishtail chart in a transverse slice.  By \cite{Murphy_12_LLEHD}, this implies that $\Lambda_f$ is loose.
\end{examples}

\begin{examples} 

When $0\in P$, the $P$-loose Legendrian $\Lambda_P$ is loose. To see this, first recall that $$D_U = \Gamma(df) \cap B^{2n} \text{ for a  function } f: D^n \rightarrow \mathbb{R}$$ 
which is an extension of a Morse function $f: S^{n-1} \to R$ that is negative on $U$ and positive on the closure of the complement. It is enough to prove that $\Lambda_P$ is loose for $P = \{0\}$; 
 as discussed in Section 2.2.2 of \cite{Lazarev_Sylvan_2023_PLWS}, $\Lambda_{P \coprod Q}$ is isotopic to $\Lambda_P \sharp \Lambda_Q$ and the connected sum of any Legendrian with a loose Legendrian (in a separate Darboux chart) is loose.

Let $U = S^k$, that is, $U$ is a $P$-Moore space for $P=\{0\}$ since its relative cohomology is $\mathbb{Z} \cong \mathbb{Z}/\{0\}$ in positive degree. Consider $U$ to be embedded in $S^{n-1}$ as the intersection $D^{k+1} \cap S^{n-1}$. Then, one can take the function $f: D^n \rightarrow \mathbb{R}$ to be a perturbation of
$$
 -(x^2_1 + \cdots x^2_{k+1}) + x_{k+2}^2 + \cdots x_{n}^2
$$

Then, $D_U = \Gamma(df)$ is Hamiltonian isotopic to the cotangent fiber, $T^*_0 D^n$, in $T^*D^n$. So, $T^*D^n \setminus T^*_0 D^n$ is just $B^{2n} \cup H^{n-1}$. 
Further, the stop $\partial D_U$ is a Legendrian that passes through the $H^{n-1}$ exactly one time, i.e., it is a loose Legendrian.

\begin{figure} [h!]
    \centering
    \begin{tikzpicture}
    \node at (0,0) {\includegraphics[width=14.5 cm]{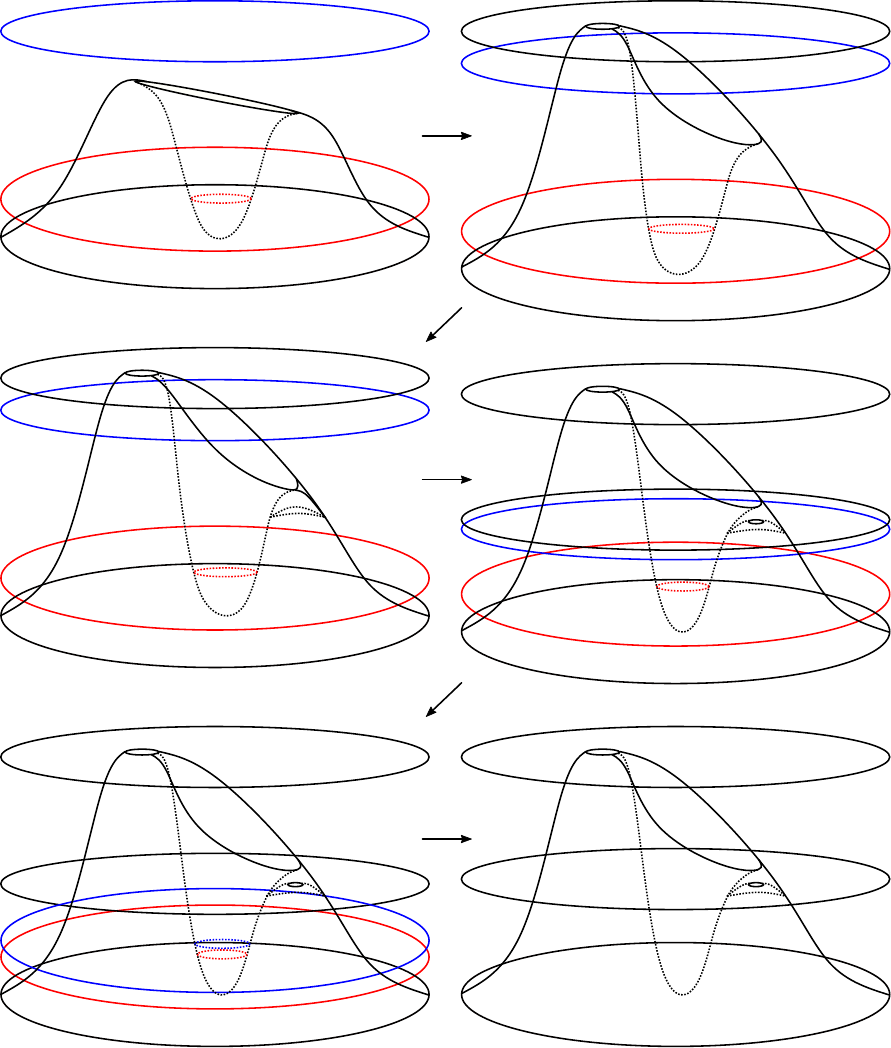}};
    \node[color=red] at (-7,6) {$\Lambda_+$};
    \node[color=blue] at (-7,7.4) {$\Lambda_-$};
    \node at (-7,4) {$\Lambda$};
    \node at (7,-6.7) {\textbf{H}};
    \end{tikzpicture}
    \caption{The process to obtain Diagram H when $n=3$. The black Legendrian $\Lambda$ begins with a maximum and a saddle point between the blue $\Lambda_-$ and the red  $\Lambda_+$. Away from the pictured region, $\Lambda_-$ and $\Lambda_+$ are parallel. We perform the following moves:\\\hspace{\textwidth}
			\textbf{(1.)} At the maximum, we perform a handleslide. \\\hspace{\textwidth}
			\textbf{(2.)} At the saddle point, we perform a boat move.\\\hspace{\textwidth}
			\textbf{(3.)} We then isotope $\Lambda_-$ downwards, and handleslide $\Lambda$ over $\Lambda_-$. \\\hspace{\textwidth}
			\textbf{(4.)} We further isotope $\Lambda_-$  so that it is in cancelling position with $\Lambda_+$. \\\hspace{\textwidth}
			\textbf{(5.)} Finally, we cancel $\Lambda_-$ and $\Lambda_+$, so that only $\Lambda$ remains.}
    \label{fig:example}
\end{figure}

\begin{figure} [h!]
    \centering
    \begin{tikzpicture}
    \node at (0,0) {\includegraphics[width=7.3 cm]{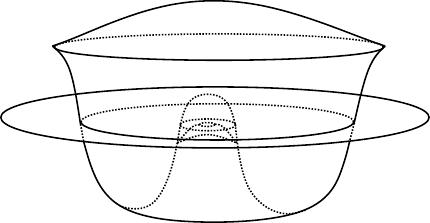}};
    \node at (-3.2,0.8) {$\Lambda_f$};
    \end{tikzpicture}
    \caption{After a Legendrian isotopy which passes the top circle of cusps across the 2-handle, the connected black Legendrian resulting from the recipe in Figure~\ref{fig:example} is in cancelling position with the subcritical handle whose attaching region is a torus around the middle disk in this dimension.} 
    \label{fig:example2}
\end{figure}

Next, we will explain how to see this explicitly by following the construction in the Proof \ref{prf: main construction}. We use the notation from Proof \ref{prf: main construction}. 
On $U = S^k$, $f$ has two critical points, namely, a maximum and a saddle point. Following the construction, to get an explicit Weinstein diagram we perform a single boat move and two handleslides. After cancelling 
$\Lambda_-$ and $\Lambda_+$, the resulting Legendrian (Diagram H) intersects the subcritical $(n-1)$-handle $H^{n-1}$ three times. This is because initially $\partial D^n$ intersected $H^{n-1}$ once, and each handleslide introduces one new intersection. For $n=3$, this process is illustrated in Figure~\ref{fig:example}. In higher dimensions, the construction follows analogously.

With Diagram H, instead of continuing with the construction in Proof \ref{prf: main construction} as is, we first do an additional step.
We perform a Legendrian isotopy that passes the circle of cusps over $H^{n-1}$. The resulting Legendrian, $\Lambda_f = \partial D_U$, passes through $H^{n-1}$ exactly one time.

To conclude, note that $\Lambda_f$ is now in cancelling position with $H^{n-1}$, see Figure~\ref{fig:example2}. Hence, it is loose, and remains loose when the flexible handle $H_{flex}$ is attached alongside it to cancel $H^{n-1}$. Thus, we obtain that $\Lambda_0$ is loose.
\end{examples}

\subsection{Questions}

One can construct a loose Legendrian unknot by pushing through any codimension zero subdomain $U$ (with boundary) past the Legendrian unknot \textit{near a cusp}.
As observed in \cite{Murphy_12_LLEHD},  $\Lambda_U$ is always loose, see Figure~\ref{fig:push_to_loose}. 
If the Euler characteristic of $U$ is 0, then $\Lambda_U$ is formally Legendrian isotopic to $\Lambda$ (but not genuinely Legendrian isotopic) and hence called the loose Legendrian unknot. So by the h-principle for loose Legendrians, $\Lambda_U$ and $\Lambda_V$ are isotopic if $\chi(U) = \chi(V)$. 

Our construction of the $P$-loose Legendrians also involves pushing through certain codimension zero subdomains (neighborhoods of $P$-Moore spaces). However, here the construction is less concrete; one must first push through to create the Lagrangian disk $D_U$, then carve out $D_U$, and then attach a flexible handle, ultimately resulting in our recipe above. Hence, it is natural to ask whether there is a more direct route towards the construction of these $P$-loose Legendrians, analogous to the construction to loose Legendrians by Murphy.
\begin{question}
Can a $P$-loose Legendrian unknot be constructed more directly by pushing through a $P$-Moore space past a region of the Legendrian unknot (not near a cusp), after Legendrian isotopy of the unknot?
\end{question}
For example, this pushing operation, if it exists, must have the property that if $U$ is disconnected, then $\Lambda_U$ must be loose, as discussed in Example~\ref{fig:example2}. 

Another line of inquiry is to see whether our algorithm can provide an alternative proof of the Ganatra-Pardon-Shende \cite{Ganatra_Pardon_Shende_descent} localization formula from the point of view of Legendrian invariants. 
The localization formula \cite{Ganatra_Pardon_Shende_descent} computes the wrapped Fukaya of the subdomain $X\setminus D$ as 
$$
\mathcal{W}(X\setminus D) \cong \mathcal{W}(X)/D,
$$
where $\mathcal{W}(X)/D$ is the algebraic localization of $\mathcal{W}(X)$ by the $D$. They describe a concrete formula computing the morphism chain complexes,  $\mathrm{Hom}_{\mathcal{W}(X)/D}(L, K)$, via a dg bar construction that depends on morphism $\mathrm{Hom}_{\mathcal{W}(X)}(L, K)$ as well as $\mathrm{Hom}_{\mathcal{W}(X)}(L, D)$ and $\mathrm{Hom}_{\mathcal{W}(X)}(D, K)$. For certain Lagrangians, like the co-cores of $X$, these complexes can all be computed using the Legendrian dga's of the attaching spheres of $X$ and $\partial D$. Hence, the Ganatra-Pardon-Shende surgery formula can be used a priori to  describe the Legendrian dga's of the attaching spheres of $X\setminus D$. On the other hand, the current paper given an explicit geometric Weinstein presentation for $X\setminus D$ and an explicit depiction of its Legendrian attaching spheres.  
\begin{question}
Can one compute the Legendrian dga's of the attaching spheres of $X\setminus D$ produced by Theorem \ref{thm: general_antisurgery_intro} directly, giving an alternative direct proof of the Ganatra-Pardon-Shende localization formula? 
\end{question}

\bibliographystyle{alpha}
\bibliography{references.bib}

\newcommand{\etalchar}[1]{$^{#1}$}
\begin{thebibliography}{ACG{\etalchar{+}}22}

\bibitem[ACG{\etalchar{+}}22]{ACSGNNSW_22_WHCSTD}
Bahar Acu, Orsola {Capovilla-Searle}, Agnès Gadbled, Aleksandra Marinković,
  Emmy Murphy, Laura Starkston, and Angela Wu.
\newblock Weinstein handlebodies for complements of smoothed toric divisors,
  2022.
\newblock arXiv:2012.08666.

\bibitem[Arn90]{Arnold_1990_SCWF}
V.~I. Arnol'd.
\newblock {\em Singularities of caustics and wave fronts}, volume~62 of {\em
  Mathematics and its Applications (Soviet Series)}.
\newblock Kluwer Academic Publishers Group, Dordrecht, 1990.

\bibitem[AS10]{Abouzaid_Seidel_2010_ASMHR}
Mohammed Abouzaid and Paul Seidel.
\newblock Altering symplectic manifolds by homologous recombination, 2010.
\newblock arXiv:1007.3281.

\bibitem[BEE12]{Bourgeois_Ekholm_Eliashberg_surgery}
Fr{\'e}d{\'e}ric Bourgeois, Tobias Ekholm, and Yakov Eliashberg.
\newblock {Effect of Legendrian surgery}.
\newblock {\em Geometry \& Topology}, 16(1):301 -- 389, 2012.

\bibitem[BST15]{Bourgeois_Sabloff_Traynor_2015_LCGFCG}
Fr\'{e}d\'{e}ric Bourgeois, Joshua~M. Sabloff, and Lisa Traynor.
\newblock Lagrangian cobordisms via generating families: construction and
  geography.
\newblock {\em Algebr. Geom. Topol.}, 15(4):2439--2477, 2015.

\bibitem[CE12]{Cieliebak_Eliashberg_2012_FSWB}
Kai Cieliebak and Yakov Eliashberg.
\newblock {\em From {S}tein to {W}einstein and back}, volume~59 of {\em
  American Mathematical Society Colloquium Publications}.
\newblock American Mathematical Society, Providence, RI, 2012.
\newblock Symplectic geometry of affine complex manifolds.

\bibitem[CM19]{Casals_Murphy_2019_LFAV}
Roger Casals and Emmy Murphy.
\newblock Legendrian fronts for affine varieties.
\newblock {\em Duke Math. J.}, 168(2):225--323, 2019.

\bibitem[DG09]{Ding_Geiges_09_HMCSD}
Fan Ding and Hansj\"{o}rg Geiges.
\newblock Handle moves in contact surgery diagrams.
\newblock {\em J. Topol.}, 2(1):105--122, 2009.

\bibitem[DR11]{Dimitroglou_Rizell_11_KLSFRC}
Georgios Dimitroglou~Rizell.
\newblock Knotted {L}egendrian surfaces with few {R}eeb chords.
\newblock {\em Algebr. Geom. Topol.}, 11(5):2903--2936, 2011.

\bibitem[DRG21]{Dimitroglou_Rizell_Golovko_2021_OLPTS}
Georgios Dimitroglou~Rizell and Roman Golovko.
\newblock On {L}egendrian products and twist spuns.
\newblock {\em Algebr. Geom. Topol.}, 21(2):665--695, 2021.

\bibitem[EES05a]{Ekholm_Etnyre_Sullivan_2005_CHLSR}
Tobias Ekholm, John Etnyre, and Michael Sullivan.
\newblock The contact homology of {L}egendrian submanifolds in {${\Bbb
  R}^{2n+1}$}.
\newblock {\em J. Differential Geom.}, 71(2):177--305, 2005.

\bibitem[EES05b]{Ekh_Etn_Sul_non_isotopic}
Tobias Ekholm, John Etnyre, and Michael Sullivan.
\newblock Non-isotopic {L}egendrian submanifolds in {$\Bbb R^{2n+1}$}.
\newblock {\em J. Differential Geom.}, 71(1):85--128, 2005.

\bibitem[EGL20]{Eliashberg_Flexible_Lagrangians}
Yakov Eliashberg, Sheel Ganatra, and Oleg Lazarev.
\newblock Flexible lagrangians.
\newblock {\em International Mathematics Research Notices},
  2020(8):2408–2435, 2020.

\bibitem[GA15]{Goryunov_Alsaeed_2015_LIFF3M}
Victor Goryunov and Suliman Alsaeed.
\newblock Local invariants of framed fronts in 3-manifolds.
\newblock {\em Arnold Math. J.}, 1(3):211--232, 2015.

\bibitem[Gol22]{Golovko_2022_NINELFSS}
Roman Golovko.
\newblock A note on the infinite number of exact {L}agrangian fillings for
  spherical spuns.
\newblock {\em Pacific J. Math.}, 317(1):143--152, 2022.

\bibitem[GPS18]{Ganatra_Pardon_Shende_descent}
Sheel Ganatra, John Pardon, and Vivek Shende.
\newblock Sectorial descent for wrapped fukaya categories, 2018.
\newblock arXiv:1809.03427.

\bibitem[Laz20]{Lazarev_2020_MCSS}
Oleg Lazarev.
\newblock Maximal contact and symplectic structures.
\newblock {\em J. Topol.}, 13(3):1058--1083, 2020.

\bibitem[Laz22]{Lazarev_grothendieck_group}
Oleg Lazarev.
\newblock Symplectic flexibility and the grothendieck group of the fukaya
  category.
\newblock {\em J. Topol.}, 15(1):204--237, 2022.

\bibitem[LS23]{Lazarev_Sylvan_2023_PLWS}
Oleg Lazarev and Zachary Sylvan.
\newblock Prime-localized {W}einstein subdomains.
\newblock {\em Geom. Topol.}, 27(2):699--737, 2023.

\bibitem[MS18]{MurSie18}
Emmy Murphy and Kyler Siegel.
\newblock Subflexible symplectic manifolds.
\newblock {\em Geom. Topol.}, 22(4):2367--2401, 2018.

\bibitem[Mur19]{Murphy_12_LLEHD}
Emmy Murphy.
\newblock Loose legendrian embeddings in high dimensional contact manifolds,
  2019.
\newblock arXiv:1201.2245.

\bibitem[{\'S}wi92]{Swiatowski_1992}
Jacek {\'S}wi{\c a}tkowski.
\newblock On the isotopy of {L}egendrian knots.
\newblock {\em Ann. Global Anal. Geom.}, 10(3):195--207, 1992.

\bibitem[Wei91]{Weinstein_91_CSSH}
Alan Weinstein.
\newblock Contact surgery and symplectic handlebodies.
\newblock {\em Hokkaido Math. J.}, 20(2):241--251, 1991.

\end{thebibliography}

\end{document}